\newtheorem{theorem}{Theorem}[section]
\newtheorem{lemma}[theorem]{Lemma}
\newtheorem{keylemma}[theorem]{Key Lemma}
\newtheorem{corollary}[theorem]{Corollary}
\newtheorem{proposition}[theorem]{Proposition}
\newtheorem{problem}[theorem]{Problem}
\newtheorem{claim}[theorem]{Claim}
\theoremstyle{definition}
\newtheorem{definition}[theorem]{Definition}
\newtheorem{remark}[theorem]{Remark}
\newcommand{\cs}{\mathrm{cs}}
\newcommand{\sbr}{\mathrm{sb}}
\newcommand{\w}{\omega}
\newcommand{\K}{\mathcal K}
\newcommand{\N}{\mathcal N}
\newcommand{\F}{\mathcal F}
\newcommand{\U}{\mathcal U}
\newcommand{\V}{\mathcal V}
\newcommand{\IN}{\mathbb N}
\newcommand{\suc}{\mathrm{succ}}
\newcommand{\A}{\mathcal A}
\newcommand{\Ra}{\Rightarrow}
\newcommand{\pr}{\mathrm{pr}}
\newcommand{\nw}{\mathrm{nw}}
\newcommand{\IR}{\mathbb{R}}
\newcommand{\sk}{sk}
\title{Sequential rectifiable spaces of countable $\cs^*$-character}
\author{Taras Banakh and Du\v san Repov\v s}
\address{T.Banakh: Ivan Franko National University of Lviv (Ukraine) and Jan Kochanowski University in Kielce (Poland)}
\email{t.o.banakh@gmail.com}
\address{D.Repov\v s: Faculty of Education, and
Faculty of Mathematics and Physics,
University of Ljubljana,
Kardeljeva Pl.~16,
Ljubljana, Slovenia 1000}
\email{dusan.repovs@guest.arnes.si}
\subjclass{54D55; 54D50; 54H10; 22A22; 22A30}
\keywords{Rectifiable space, sequential space, $k_\omega$-space, $\cs^*$-character, topological loop, topological left-loop, topological lop}
\begin{document}
\begin{abstract} We prove that each non-metrizable sequential rectifiable space $X$ of countable $\cs^*$-character contains a clopen rectifiable submetrizable $k_\w$-subspace $H$ and admits a disjoint cover by open subsets homeomorphic to clopen subspaces of $H$. This implies that each sequential rectifiable space  of countable $\cs^*$-character is either metrizable or a topological sum of submetrizable $k_\w$-spaces. Consequently, $X$ is submetrizable and paracompact. This answers a question of Lin and Shen posed in 2011. 
\end{abstract}
\maketitle

\section{Introduction and Main Results}

In this paper we generalize to rectifiable spaces a result  of Banakh and Zdomskyy \cite{BZ} on the structure of sequential topological groups of countable $\cs^*$-character. They proved that any such group is either metrizable or contains an open submetrizable $k_\omega$-subgroup.

Rectifiable spaces were introduced by Arkhangel'ski\u\i\ as non-associative generalizations of topological groups. We recall that a topological space $X$ is {\em rectifiable} if there exist a point $e\in X$ and a homeomorphism $h:X\times X\to X\times X$ such that $h(x,e)=(x,x)$ and $h(\{x\}\times X)\subset \{x\}\times X$ for every $x\in X$. So, for every $x\in X$ the restriction $h|\{x\}\times X$ is a homeomorphism of $\{x\}\times X$ sending the point $(x,e)$ onto $(x,x)$. This means that the rectifiable space $X$ is topologically homogeneous.

Each topological group $G$ is a rectifiable space, this is witnessed by the homeomorphism $h:G\times G\to G\times G$, $h:(x,y)\mapsto(x,xy)$. On the other hand, there are examples of rectifiable spaces, which are not homeomorphic to topological groups. The simplest example of such space is the 7-dimensional sphere $S^7$ (see \cite[p.13]{Ar02} or \cite[\S3]{Us}).

Rectifiable spaces have many properties in common with topological groups. For example, rectifiable spaces are topologically homogeneous, rectifiable $T_0$-spaces are regular, first countable rectifiable $T_0$-spaces are metrizable, compact rectifiable spaces are Dugundji, etc. These and many other basic properties of rectifiable spaces were proved in \cite{Gul} and \cite{Us}. Some sequential and network properties of rectifiable spaces were discussed in \cite{Lin13}, \cite{LLL}, \cite{LS}.  The rectifiability of a topological space $X$ is equivalent to the existence of a compatible structure of topological lop on $X$. This equivalence will be discussed in Section~\ref{s:rectolop}. More information on rectifiable spaces can be found in the survey \cite[\S4]{Ar02}.

Having in mind that all rectifiable $T_0$-spaces are regular, we will restrict ourselves to regular spaces and will assume that all topological spaces in this paper are regular.

The aim of this paper is to recover the topological structure of sequential rectifiable topological spaces of countable $\cs^*$-character. Spaces of countable $\cs^*$-character were introduced and studied in \cite{BZ}. They are defined by means of $\cs^*$-networks. Next, we recall the definition of $\cs^*$-networks and some related notions.

Let $X$ be a topological space and $x\in X$. A family $\mathcal N$ of subsets of $X$ is called a {\em $\cs$-network} (resp. {\em $\cs^*$-network}) at $x\in X$ if for each neighborhood $O_x\subset X$ of $x$ and sequence $\{x_n\}_{n\in\w}\subset X$ that converges to $x$ in $X$ there exists a set $N\in\mathcal N$ such that $x\in N\subset O_x$ and $N$ contains all but finitely many (resp. infinitely many) elements of the sequence. It is clear that each $\cs$-network at $x$ is a $\cs^*$-network at $x$ while the converse is in general not true. However, if $\mathcal N$ is a countable $\cs^*$-network at $x$, then the family $\widetilde{\mathcal N}=\big\{\bigcup\F:\F\subset \N,\;|\F|<\w\big\}$ is a countable $\cs$-network at $x$ (see Lemma~\ref{l:cs=cs*}).

By the {\em $\cs$-character} $\cs_\chi(X,x)$ (resp. {\em $\cs^*$-character} $\cs^*(X,x)$) of a topological space $X$ at a point $x\in X$ we understand the smallest cardinality $|\mathcal N|$ of a $\cs$-network (resp. $\cs^*$-network) $\mathcal N$ at $x$. It is clear that $\cs^*_\chi(x,X)\le\cs_\chi(x,X)$. We know of no examples of  pointed topological spaces $(X,x)$ with $\cs_\chi^*(X,x)<\cs_\chi(X,x)$ (see Problem~1 in \cite{BZ}). For a topological space $X$ the cardinals $\cs_\chi(X)=\sup_{x\in X}\cs_\chi(X,x)$ and $\cs^*_\chi(X)=\sup_{x\in X}\cs^*_\chi(X,x)$ are called the {\em $\cs$-character} and the {\em $\cs^*$-character} of $X$, respectively.
It was observed in \cite{BZ} that a topological space $X$ has a countable $\cs^*$-character (at a point $x\in X$) if and only if $X$ has countable $\cs$-character (at $x$).

It is clear that each first-countable space $X$ has countable $\cs^*$-character. Another standard example of a space of countable $\cs^*$-character is any submetrizable $k_\omega$-space. A topological space $X$ is called
\begin{itemize}
\item {\em submetrizable} if $X$ admits a continuous metric;
\item a {\em $k_\omega$-space} (resp. {\em $sk_\w$-space}) if there exists a countable cover $\K$ of $X$ by compact (and metrizable) subspaces such that a set $A\subset X$ is closed in $X$ if and only if for any $K\in\K$ the intersection $A\cap K$ is closed in $K$;
\item a {\em $k$-space} if a subset $A\subset X$ is closed if and only if for any compact subset $K\subset X$ the intersection $K\cap A$ is closed in $K$;
\item {\em sequential} if for each non-closed subset $A\subset U$
there is a sequence $\{a_n\}_{n\in\w}\subset A$ that converges to a point $x\notin A$ in $X$;
\item {\em Fr\'echet-Urysohn} if for each subset $A\subset X$ and a point $a\in \bar A$ there is a sequence $\{a_n\}_{n\in\w}\subset A$, converging to the point $a$.
\end{itemize}
It can be shown that a topological space $X$ is an $sk_\w$-space if and only if it is a submetrizable $k_\w$-space.
It is clear that each Fr\'echet-Urysohn space is sequential and each sequential space is a $k$-space. Each $\sk_\w$-space is sequential (being a $k$-space with metrizable compact subsets).

To see that any $sk_\w$-space $X$ has countable $\cs^*$-character, fix a countable cover $\K$ of $X$ by compact metrizable subsets such that a subset $A\subset X$ is closed if and only if for every $K\in\K$ the intersection $A\cap K$ is closed in $K$.
We can assume that $\K=\{K_n\}_{n\in\w}$ for an increasing sequence $(K_n)_{n\in\w}$ of compact subsets of $X$. For every metrizable compact space $K_n$ fix a countable base $\mathcal B_n$ of its topology. Then the family $\bigcup_{n\in\w}\mathcal B_n$ is a countable $\cs$-network at each point $x\in X$, which implies that $X$ has countable $\cs^*$-character.

It turns out that in the class of sequential rectifiable spaces, metrizable spaces and $sk_\w$-spaces are the only sources of spaces of countable $\cs^*$-character. The following theorem is the main result of this paper. It follows immediately from Theorem~\ref{t:rectilop}, \ref{t:sum} and \ref{t:nonmetr}.

\begin{theorem}\label{main} Each non-metrizable sequential rectifiable space $X$ of countable $\cs^*$-character contains a clopen rectifiable $sk_\w$-subspace $H$ and admits a disjoint open cover by subspaces homeomorphic to clopen subspaces of $H$.
\end{theorem}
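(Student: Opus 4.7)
The plan is to deduce Theorem~\ref{main} from three ingredients to be established separately: Theorem~\ref{t:rectilop} (rectifiable spaces are precisely topological lops), Theorem~\ref{t:nonmetr} (producing a clopen rectifiable $sk_\w$-subspace $H$), and Theorem~\ref{t:sum} (turning $H$ into a disjoint open cover by clopen copies of pieces of $H$). The overall strategy mirrors the topological-group argument of Banakh--Zdomskyy~\cite{BZ}, with lop operations replacing the group multiplication.

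\emph{Step one: pass to the algebraic side.} Using the rectification homeomorphism $h$, I would endow $X$ with a continuous binary operation whose two-sided identity is the distinguished point $e$ and whose left translations $L_x:y\mapsto x\cdot y$ are homeomorphisms of $X$. This is Theorem~\ref{t:rectilop}; it is what licenses us to speak of sub-lops and of left translates $xA$ in the sequel.

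\emph{Step two: manufacture $H$.} Fix $e\in X$ and, using the countable $\cs$-network at $e$ supplied by Lemma~\ref{l:cs=cs*}, I would build a countable family of compact metrizable sets $\{K_n\}_{n\in\w}$ whose union $H$ is closed under the lop operations and carries the $k_\w$-topology determined by the $K_n$. Sequentiality of $X$ lets us detect closedness of sets through convergent sequences, while countable $\cs^*$-character keeps the inductive construction countable. Since $X$ is non-metrizable and first-countable rectifiable $T_0$-spaces are metrizable, $H$ cannot be contained in the first-countable fragment of $X$; a Baire-type argument then forces $H$ to have non-empty interior, and homogeneity promotes $H$ to an open set, which a saturation by left and right translations upgrades to a clopen sub-lop. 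This yields Theorem~\ref{t:nonmetr}.

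\emph{Step three: assemble the disjoint open cover.} Each left translation $L_x$ is a homeomorphism carrying $H$ onto the clopen set $xH$, homeomorphic to $H$. The issue is that, absent associativity, two translates $xH$ and $yH$ need not be either equal or disjoint. I would therefore invoke Zorn's lemma to choose a maximal set $T\subseteq X$ of representatives such that the translates $\{xH:x\in T\}$ are pairwise disjoint; maximality and openness of $H$ imply that these sets cover $X$, and each is homeomorphic via $L_x$ to a clopen subset of $H$, namely $L_x^{-1}\bigl(xH\cap X\bigr)$. Packaging this is the content of Theorem~\ref{t:sum}.

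The hard part will be Step two. In the topological-group case one iteratively closes finite words under multiplication and inversion, freely using associativity; in a lop, finite products depend on bracketing, so one must instead iterate the left and right translations together with the ``division'' maps produced by the rectification, and then verify that sequentiality plus countable $\cs^*$-character suffice to keep the resulting countable union $H$ both closed in $X$ and equipped with the $k_\w$ topology determined by its compact metrizable pieces.
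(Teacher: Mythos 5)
Your top-level decomposition is exactly the paper's: Theorem~\ref{main} is obtained by combining Theorem~\ref{t:rectilop} (rectifiable $=$ topological lop), Theorem~\ref{t:nonmetr} (existence of a clopen $\sk_\w$-sublop $H$), and Theorem~\ref{t:sum} (a disjoint open cover refining $\{xH:x\in X\}$). However, your sketches of the two substantive ingredients contain genuine gaps.

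In Step two you never identify the mechanism that produces compact metrizable pieces. The paper's route is: non-metrizability together with the metrization criterion of Theorem~\ref{t:mc} (which rests on Key Lemma~\ref{l:key}) yields a closed copy of the Fr\'echet--Urysohn fan $S_\w$; Corollary~\ref{c:fans} then forbids a closed copy of the metric fan $M_\w$ in the sequential lop $X$; and this absence is precisely what allows one to extract, from a countable $\cs^*$-network at $e$, a $\cs$-network $\K$ consisting of countably compact sets, which are metrizable compacta by Theorem~\ref{t:ccm}. Without this fan dichotomy there is no source of compact sets at all. Moreover, your ``Baire-type argument'' for giving $H$ nonempty interior does not work: $H=\bigcup_n K_n$ is merely $\sigma$-compact, and there is no reason for it to be non-meager in $X$; the classical group-theoretic Baire trick uses that cosets of a subgroup tile the space, which is unavailable here (and is essentially what Step three is supposed to establish). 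The paper instead shows directly that $U=\bigcup\K$ is a sequential barrier at each of its points (because $\K$ is a $\cs$-network at $e$ and left translations are homeomorphisms), so that sequentiality of $X$ makes $U$ open, and Proposition~\ref{p:sublop} makes it closed.

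Step three is also broken as stated. In the absence of associativity a maximal pairwise disjoint family $\{xH:x\in T\}$ need not cover $X$: a point $y\notin\bigcup_{x\in T}xH$ cannot be adjoined to $T$ if $yH$ merely overlaps some $xH$ without containing $y$ in it, and nothing rules this out. (Note also that $L_x^{-1}(xH\cap X)=H$, so your description never explains why \emph{proper} clopen subsets of $H$ must appear in the cover.) The paper's Theorem~\ref{t:sum} avoids this by transfinite induction on the Lindel\"of number: in the Lindel\"of case one takes a countable subcover $\{x_nH\}_{n\in\w}$ and disjointifies by $U_n=x_nH\setminus\bigcup_{k<n}x_kH$, which is clopen because each translate $x_kH$ is clopen (Proposition~\ref{p:sublop} plus the fact that left shifts are homeomorphisms), and $L_{x_n}^{-1}(U_n)$ is then a clopen subset of $H$; the general case builds an increasing transfinite chain of open sublops of smaller Lindel\"of number and glues the resulting covers along the clopen differences.
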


A subset of a topological space $X$ is called {\em clopen} if it is closed and open.
Theorem~\ref{main} implies the following corollary, which answers  Question 7.7 posed by Lin and Shen \cite{LS}.

\begin{corollary} Each sequential rectifiable space $X$ of countable $\cs^*$-character is paracompact and submetrizable.
\end{corollary}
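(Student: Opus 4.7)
The plan is to reduce the corollary to Theorem~\ref{main} via a case split and two standard permanence properties of topological sums. If $X$ is metrizable there is nothing to prove, since metrizable spaces are paracompact and their own metric witnesses submetrizability. So I assume $X$ is non-metrizable and apply Theorem~\ref{main} to obtain a clopen rectifiable $sk_\w$-subspace $H\subset X$ together with a disjoint open cover $\{U_\alpha\}_{\alpha\in A}$ of $X$ such that each $U_\alpha$ is homeomorphic to a clopen subspace of $H$. Since the $U_\alpha$ are pairwise disjoint and open, each one is automatically closed as well, so $X$ is the topological sum of the family $\{U_\alpha\}_{\alpha\in A}$.

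The first step would be to verify that each $U_\alpha$ is paracompact and submetrizable. Since $U_\alpha$ is homeomorphic to a clopen subspace of $H$, it suffices to establish both properties for $H$ itself and then to observe that clopen subspaces inherit them: paracompactness is closed-hereditary, and the restriction of a continuous metric remains a continuous metric. For $H$, submetrizability is essentially carried out in the discussion preceding Theorem~\ref{main}, where a countable $\cs$-network is built from bases of the compact metrizable layers $K_n$; this network can be upgraded in the standard way to a continuous metric. Paracompactness of $H$ follows from its $\sigma$-compactness (hence Lindel\"ofness), combined with regularity, via the classical theorem that every regular Lindel\"of space is paracompact.

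The second step would be to check that paracompactness and submetrizability are preserved under arbitrary topological sums. For paracompactness this is routine: any open cover of $X=\bigsqcup_{\alpha\in A}U_\alpha$ restricts to an open cover of each $U_\alpha$, and locally finite refinements chosen inside each summand reassemble into a locally finite refinement of the original cover. For submetrizability, pick for every $\alpha$ a continuous metric $d_\alpha$ on $U_\alpha$ bounded by $1$, and define $d\colon X\times X\to\IR$ by $d(x,y)=d_\alpha(x,y)$ when $x,y$ lie in a common $U_\alpha$ and $d(x,y)=1$ otherwise; the clopenness of each $U_\alpha$ guarantees that $d$ is continuous, and $d$ is clearly a metric. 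I do not foresee an essential obstacle beyond these bookkeeping checks; the only mildly delicate point is continuity of the patched metric, and this is precisely where clopenness of the summands $U_\alpha$ enters.
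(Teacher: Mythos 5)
Your argument is correct and is essentially the route the paper intends: the corollary is derived from Theorem~\ref{main} by splitting into the metrizable case and the case where $X$ decomposes as a topological sum of clopen pieces homeomorphic to clopen subspaces of an $sk_\w$-space, each of which is Lindel\"of (hence paracompact) and submetrizable, these properties being preserved by topological sums. The paper packages the same permanence facts inside Theorem~\ref{t:sum} (``a topological sum of cosmic subspaces and hence submetrizable and strongly paracompact''), so your explicit verification of the patched metric and the reassembled locally finite refinements just spells out what the paper leaves implicit.
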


 Theorem~\ref{main} was proved for topological groups by Banakh and Zdomskyy \cite{BZ} in a bit stronger form: {\em each non-metrizable sequential topological group $G$ of countable $\cs^*$-character is homeomorphic to the product $H\times D$ of an $sk_\w$-space $H\subset G$ and a discrete space $D$}. We do not know if this result remains valid for rectifiable spaces.

\begin{problem}\label{prob1new} Let $X$ be a non-metrizable sequential rectifiable space of countable $\cs^*$-character. Is $X$ homeomorphic to the product $H\times D$ of an $sk_\w$-space $H$ and a discrete space $D$?
\end{problem}

The answer to this problem is affirmative if the rectifiable space is locally connected.

\begin{corollary}\label{c:decomp} Any locally connected non-metrizable  sequential rectifiable space $X$ of countable $\cs^*$-character is homeomorphic to the product $H\times D$ of a connected $sk_\w$-space $H$ and a discrete space $D$.
\end{corollary}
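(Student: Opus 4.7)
The plan is to exploit local connectedness together with the homogeneity coming from the rectifiable structure, and to extract the $sk_\w$-structure of one connected component directly from Theorem~\ref{main}.

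First I would observe that, since $X$ is locally connected, every connected component of $X$ is open, hence clopen. Because $X$ is rectifiable it is topologically homogeneous, so any two components of $X$ are homeomorphic: a self-homeomorphism of $X$ sending $x$ to $y$ carries the component of $x$ onto the component of $y$, since the continuous image of a connected set is connected, and a connected set meeting a clopen component is contained in that component.

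Next I would apply Theorem~\ref{main} to produce a clopen rectifiable $sk_\w$-subspace $H_0 \subseteq X$ containing the distinguished point $e$. As a clopen subspace of the locally connected space $X$, the subspace $H_0$ is itself locally connected, so the connected component $H$ of $e$ in $H_0$ is clopen in $H_0$ and therefore clopen in $X$. A short verification then shows that $H$ inherits the $sk_\w$-property from $H_0$: if $(K_n)_{n\in\w}$ is an increasing compact metrizable cover witnessing the $sk_\w$-property of $H_0$, then $(H\cap K_n)_{n\in\w}$ is an analogous cover for $H$, since each $H\cap K_n$ is clopen in the compact metrizable space $K_n$, and a subset $A\subseteq H$ is closed in $H$ iff it is closed in $H_0$ iff it meets every $K_n$ in a closed set, which amounts to the corresponding condition for the cover $\{H\cap K_n\}$. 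Thus $H$ is a connected $sk_\w$-space; being clopen and connected in $X$, it coincides with the connected component of $e$ in $X$, and by the previous paragraph every component of $X$ is homeomorphic to $H$.

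Letting $D$ be the discrete space whose underlying set is the set of connected components of $X$, the disjoint clopen partition of $X$ into these components yields $X \cong \bigsqcup_{d\in D} H \cong H\times D$, as required. I expect the main (mildly delicate) step to be the inheritance of the $sk_\w$-property by $H$; the rest amounts to routine bookkeeping of clopen components combined with the homogeneity of rectifiable spaces.
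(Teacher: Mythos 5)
Your proposal is correct and follows essentially the same route as the paper: apply Theorem~\ref{main} to get a clopen $sk_\w$-subspace, use local connectedness to see that the connected component of a point of that subspace is clopen and inherits the $sk_\w$-property (the paper simply cites that a closed subspace of an $sk_\w$-space is an $sk_\w$-space, where you spell out the cover $\{H\cap K_n\}$), and then use topological homogeneity to identify $X$ with the sum of pairwise homeomorphic components, i.e.\ $H\times D$.
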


\begin{proof} By Theorem~\ref{main}, the space $X$ contains a non-empty clopen $sk_\w$-subspace $U$. For any point $e\in H$ its connected component $H_e$ is contained in $U$ (by the connectedness of $H_e$) and is clopen in $X$ (by the local connectedness of $X$). Being a closed subset of the $sk_\w$-space $U$, the component $H_e$ is an $sk_\w$-space, too.

The topological homogeneity of $X$ implies that all connected component of $X$ are pairwise homeomorphic and open in $X$. Then the space $X$, being a topological sum of its connected components, is homeomorphic to the product $H_e\times D$ for some discrete space $D$ (whose cardinality is equal to the number of connected components of $X$).
\end{proof}

An affirmative answer to the following problem would imply an affirmative answer to Problem~\ref{prob1new}.

\begin{problem} Let $X$ be a non-metrizable rectifiable $sk_\w$-space. Is then every non-empty clopen subspace of $X$ homeomorphic to $X$?
\end{problem}

Next we briefly describe the structure of the paper. In Section~\ref{s:toptool} we develop some topological tools that are necessary for the proof of Theorem~\ref{main}. In Section~\ref{s:rectolop} we discuss the interplay between rectifiable spaces and topological lops (which are ``left'' generalizations of topological groups and loops). In particular, in Section~\ref{s:rectolop} we prove Theorem~\ref{t:sum} on decomposition of  locally cosmic topological lops into a direct topological sum of cosmic spaces. In Section~\ref{s:fans} we study properties of rectifiable spaces that contain closed copies of the metric fan $M_\w$ and the Fr\'echet-Urysohn fan $S_\w$, and prove that such rectifiable spaces cannot simultaneously be sequential and have countable $\cs^*$-character. For normal topological groups this result was proved by Banakh \cite{Ban98}, it was generalized to arbitrary topological groups by Banakh and Zdomskyy \cite{BZ} and to normal rectifiable spaces by F.~Lin, C.~Liu, and S.~Lin \cite{LLL}. Section~\ref{s:key} contains Key Lemma~\ref{l:key}, which is technically the most difficult result of the paper. In Section~\ref{s:metr}  Key Lemma~\ref{l:key} is applied to prove some metrizability criteria for rectifiable spaces of countable $\cs^*$-network and countably compact subsets in such spaces. In Section~\ref{s:nonmetr} we prove an algebraic version of Theorem~\ref{main} showing that each non-metrizable sequential topological lop of countable $\cs^*$-character contains a clopen $sk_\w$-sublop.

\section{Some topological tools}\label{s:toptool}

In this section we will discuss some additional topological tools which will be used in the proof of Theorem~\ref{main}.

One of such instruments is the notion of  a sequence tree. As usual, by a {\em tree} we understand a
partially ordered subset $(T,\le)$ such that for each $t\in T$ the
set $\downarrow t=\{\tau\in T:\tau\le t\}$ is well-ordered by
the order $\le$. Given an element $t\in T$ let $\uparrow
t=\{\tau\in T:\tau\ge t\}$ and let $\mathrm{succ}(t)=\min(\uparrow
t\setminus\{t\})$ be the set of successors of $t$ in $T$. A
maximal linearly ordered subset of a tree $T$ is called a {\em
branch} of $T$. We denote the set of maximal elements
of the tree $T$ by $\max T$.

\begin{definition} By a {\em sequence tree} in a topological space $X$ we understand a tree
$(T,\le)$ such that
\begin{itemize}
\item $T\subset X$;
\item $T$ has no infinite branches;
\item for each $t\notin\max T$ the set $\mathrm{succ}(t)$
of successors of $t$ is countable and converges to $t$.
\end{itemize}
\end{definition}

By saying that a subset $S$ of a topological space $X$ {\em converges} to a
point $x\in X$ we mean that for each neighborhood $O_x\subset X$ of
$x$, the set $S\setminus O_x$ is finite.
In the sequel, by a {\em convergent sequence} in a topological space $X$ we will understand a compact countable subset $S\subset X$ with a unique non-isolated point, which will be denoted by $\lim S$.

The following lemma is well-known \cite[Lemma~1]{BZ} and can be easily proven by
transfinite induction (on the sequential height $s(a,A)=\min\{\alpha:a\in
A^{(\alpha)}\}$ of a point $a$ in the closure $\bar A$ of a subset $A$ of a sequential space $X$).

\begin{lemma}\label{l1} A point $a\in X$ of a sequential topological space
$X$ belongs to the closure of a subset $A\subset X$ if and only if
there is a sequence tree $T\subset X$ such that $\min T=\{a\}$ and
$\max T\subset A$.
\end{lemma}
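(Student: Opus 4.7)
The plan is to handle the two directions separately, using well-founded induction in the first and transfinite induction on sequential height in the second.

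For the ``if'' direction, given a sequence tree $T$ with $\min T=\{a\}$ and $\max T\subset A$, I would assign an ordinal rank to each $t\in T$ (this is legitimate because the inverse order on $T$ is well-founded, $T$ having no infinite branches), and prove by induction on rank that every $t\in T$ lies in $\bar A$. Leaves sit in $A\subset\bar A$, and for a non-leaf $t$ the inductive hypothesis gives $\mathrm{succ}(t)\subset\bar A$; since $\mathrm{succ}(t)$ converges to $t$, one gets $t\in\overline{\mathrm{succ}(t)}\subset\bar A$. Specializing to $t=a$ yields $a\in\bar A$.

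For the ``only if'' direction, I would induct on the sequential height $s(a,A)$, using the standard stratification $A^{(0)}=A$, $A^{(\alpha+1)}=\{\lim x_k:(x_k)\subset A^{(\alpha)}\text{ convergent in }X\}$, and $A^{(\lambda)}=\bigcup_{\alpha<\lambda}A^{(\alpha)}$ at limits; sequentiality gives $\bar A=\bigcup_\alpha A^{(\alpha)}$, and $s(a,A)$ is always either $0$ or a successor. To sidestep a set-theoretic nuisance in the inductive step, I would strengthen the claim to: \emph{for every open $V\ni a$, there is a sequence tree $T\subset V$ with $\min T=\{a\}$ and $\max T\subset A\cap V$}. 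The case $s(a,A)=0$ is handled by $T=\{a\}$. For $s(a,A)=\alpha+1$, I would fix a sequence $(a_n)_{n\in\w}\subset A^{(\alpha)}$ converging to $a$, pass to a subsequence so that the $a_n$ are distinct, all different from $a$, and contained in $V$, and then (using regularity, a standard fact for convergent sequences in Hausdorff spaces) choose pairwise disjoint open $V_n\subset V\setminus\{a\}$ with $a_n\in V_n$. A brief auxiliary induction shows $s(a_n,A\cap V_n)\le s(a_n,A)\le\alpha$, so the strengthened hypothesis supplies trees $T_n\subset V_n$ rooted at $a_n$ with $\max T_n\subset A\cap V_n$. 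Declaring $a$ to lie below every $T_n$ and keeping each $T_n$'s internal order, the set $T=\{a\}\cup\bigcup_{n\in\w} T_n$ is then a sequence tree inside $V$ with $\mathrm{succ}(a)=\{a_n:n\in\w\}$ converging to $a$, $\max T\subset A$, and no infinite branches.

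The only substantive obstacle I anticipate is precisely the set-theoretic point just resolved: without localizing the subtrees $T_n$ inside disjoint neighborhoods $V_n$, the $T_n$ could share elements of $X$ or even contain $a$, and the naive union would fail to be a tree. Strengthening the inductive hypothesis as above uses regularity of $X$ to handle this in one stroke; the rest of the argument is routine.
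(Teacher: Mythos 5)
Your proof is correct and follows exactly the route the paper indicates: the paper gives no written argument, only the remark that the lemma is well known and ``can be easily proven by transfinite induction on the sequential height $s(a,A)$,'' which is precisely the induction you carry out (with the useful extra care of localizing the subtrees $T_n$ in pairwise disjoint neighborhoods so that their union is again a tree). The ``if'' direction by well-founded induction on the rank of the tree is likewise the standard argument, so there is nothing to add.
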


An important role in the theory of generalized metric spaces belongs to the following three test spaces: the metric fan $M_\w$, the Fr\'echet-Urysohn fan $S_\w$ and the Arens fan $S_2$ (see \cite{GTZ}, \cite{Lin}). The first two spaces will play an important role in our considerations, too.

The {\em metric fan} is the subspace
$$M_\w=\{(0,0)\}\cup\{(\tfrac1n,\tfrac1{nm}):n,m\in\IN\}\subset\IR^2$$ of the Euclidean plane. The metric fan is not locally compact at its unique non-isolated point $(0,0)$. By \cite[8.3]{vD}, a metrizable space is not locally compact if and only if it contains a closed subspace homeomorphic to the metric fan $M_\w$. Observe that for every $m\in\IN$ the compact set $I_m=\{(0,0\}\cup\{(\frac1n,\frac1{nm}):n\in\IN\}\subset M_\w$ converges to $(0,0)$.

By the {\em Fr\'echet-Urysohn fan} we understand the union $S_\w=\bigcup_{m\in\IN}I_m$ endowed with the strongest topology inducing the Euclidean topology on each convergent sequence  $I_m$, $m\in\IN$. The space $S_\w$ is a standard example of a Fr\'echet-Urysohn space which is not first-countable.

The {\em Arens fan} $S_2$ is the $k_\w$-space
$$S_2=\{(0,0)\}\cup\{(\tfrac1n,0):n\in\IN\}\cup\{(\tfrac1n,\tfrac1{nm}):n,m\in\IN\}$$
endowed with the strongest topology inducing the Euclidean topology on the convergent sequences
$$\{(0,0)\}\cup\big\{(\tfrac1n,0):n\in\IN\big\}\mbox{ \ and \ }\{(\tfrac1n,0)\}\cup\big\{(\tfrac1n,\tfrac1{nm}):m\in\IN\big\}\mbox{ \ for all $n\in\IN$}.
$$
The Arens fan $S_2$ is a standard example of a sequential space which is not Fr\'echet-Urysohn (see \cite[1.6.19]{En}).

We will also need some information on sequential barriers and $sb$-networks.
A subset $B$ of a topological space $X$ is called a {\em sequential barrier} at a point $x\in X$ if it contains all but finitely many points of every sequence that converges to $x$. It is clear that each neighborhood of $x$ is a sequential barrier at $x$. The converse is true if $X$ is  Fr\'echet-Urysohn at $x$. The latter means that each subset $A\subset X$ with $x\in\bar A$ contains a sequence that converges to $x$.

In general, sequential barriers need not be neighborhoods. For example, the (nowhere dense) set $B=\{(0,0)\}\cup\{(\frac1n,0):n\in\IN\}$ is a sequential barrier at the point $x=(0,0)$ of the Arens fan $S_2$.

A family $\mathcal B$ of sequential barriers at a point $x$ of a topological space $X$ is called an {\em $sb$-network} at $x$ if each neighborhood $O_x\subset X$ of $x$ contains some sequential barrier $B\in\mathcal B$.

A family $\mathcal N$ of subsets of a topological space $X$ is called a {\em network} if for each point $x\in X$ and neighborhood $O_x\subset X$ of $x$ there is a set $N\in\mathcal N$ such that $x\in N\subset O_x$. The smallest cardinality of a network is called the {\em network weight} of $X$ and is denoted by $\nw(X)$.
Regular spaces of countable network weight are called {\em cosmic} (see \cite[\S10]{Gru2}).

Finally, let us recall that a topological space $X$ is
\begin{itemize}
\item {\em sequentially compact} if each sequence in $X$ contains a convergent subsequence;
\item {\em countably compact} if each sequence in $X$ has an accumulation point in $X$.
\end{itemize}
It is clear that each sequentially compact space is countably compact.
A sequential space is sequentially compact if and only if it is countably compact.

For a convenience of the reader we give a proof of the following simple fact, first observed in \cite{BZ}.

\begin{lemma}\label{l:cs=cs*} A topological space $X$ has countable $\cs^*$-character \textup{(}at a point $x\in X$\textup{)} if and only if the space $X$ has countable $\cs$-character \textup{(}at $x$\textup{)}.
\end{lemma}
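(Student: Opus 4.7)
The plan is to prove both directions, one of which is essentially free. For the easy direction, observe that every $\cs$-network at $x$ is automatically a $\cs^*$-network at $x$, so $\cs^*_\chi(X,x)\le\cs_\chi(X,x)$; hence countable $\cs$-character implies countable $\cs^*$-character. The content of the lemma is the converse, and the strategy (hinted at in the introduction) is to close a countable $\cs^*$-network $\N$ at $x$ under finite unions. Since $\N$ is countable, so is $\widetilde{\N}=\{\bigcup\F:\F\subset\N,\;|\F|<\w\}$, and it suffices to verify that $\widetilde\N$ is a $\cs$-network at $x$.

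To check this, I would argue by contradiction. Suppose there is a neighborhood $O_x$ of $x$ and a sequence $(x_n)_{n\in\w}\to x$ such that no $M\in\widetilde\N$ with $x\in M\subset O_x$ contains all but finitely many $x_n$. Enumerate the countable family $\N_{O_x}:=\{N\in\N:N\subset O_x\}$ as $\{N_k\}_{k\in\w}$, and fix (via the $\cs^*$-network applied to the constant sequence $x$) some $N_*\in\N$ with $x\in N_*\subset O_x$. Setting $M_k:=N_*\cup N_1\cup\cdots\cup N_k$, each $M_k\in\widetilde\N$ satisfies $x\in M_k\subset O_x$, so by the contradiction hypothesis we can pick $n_k$ strictly increasing with $x_{n_k}\notin M_k$.

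The subsequence $(x_{n_k})_{k\in\w}$ still converges to $x$, but by construction, for every fixed $i$, the inclusion $x_{n_k}\in N_i$ can hold only for $k<i$. Applying the $\cs^*$-network property to $(x_{n_k})$ and $O_x$ yields some $N\in\N$ with $x\in N\subset O_x$ (thus $N=N_{i_0}$ for some $i_0$) containing infinitely many $x_{n_k}$, which contradicts the previous sentence. This furnishes the required contradiction and completes the proof.

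The only mild subtlety I anticipate is remembering to enumerate \emph{all} sets $N\in\N$ with $N\subset O_x$, not merely those additionally satisfying $x\in N$: a sequence converging to $x$ can have infinitely many terms inside a set that avoids $x$, so including such sets in the diagonalization is what forces the contradictory $N_{i_0}$ to be of a controllable form. Beyond this bookkeeping point, the argument is a standard diagonalization.
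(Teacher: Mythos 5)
Your proof is correct and follows essentially the same route as the paper: close the countable $\cs^*$-network under finite unions, enumerate the members of $\N$ contained in $O_x$, and diagonalize --- your strictly increasing sequence $(n_k)$ with $x_{n_k}\notin M_k$ is just an explicit pseudo-intersection of the decreasing sets $\{n:x_n\notin M_k\}$ that the paper extracts abstractly. (Your closing caveat is harmless but unnecessary: the set $N$ furnished by the $\cs^*$-property must contain $x$ by definition, so enumerating only those $N$ with $x\in N\subset O_x$ would have worked equally well.)
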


\begin{proof} The ``if'' part is trivial. To prove the ``only if'' part, fix a countable $\cs^*$-network $\mathcal N$ at $x$. Replacing $\mathcal N$ by a larger family, we can assume that $\mathcal N$ is closed under finite unions. We claim that in this case $\mathcal N$ is a $\cs$-network at $x$. Fix any neighborhood $O_x\subset X$ of $x$ and a sequence $(x_n)_{n\in\w}$ converging to $x$. Consider the countable subfamily $\mathcal M=\{N\in\mathcal N:N\subset O_x\}$ of $\mathcal N$ and let $\mathcal M=\{N_k\}_{k\in\w}$ be its enumeration. We claim that for some $n\in\w$ the set $M_n=\bigcup_{k\le n}N_k$ contains all but finitely many points of the sequence $(x_k)$, which is equivalent to saying that the set $\Omega_n=\{k\in\w:x_k\notin M_n\}$ is finite. It follows from $M_n\subset M_{n+1}$ that $\Omega_n\supset\Omega_{n+1}$ for all $n$. Assuming that every set $\Omega_n$, $n\in\w$, is infinite, we can find an infinite subset $\Omega\subset \w$ such that $\Omega\setminus \Omega_n$ is finite for all $n\in\w$. Since $\mathcal N$ is a $\cs^*$-network at the limit point $x$ of the sequence $(x_n)_{n\in\Omega}$,
there exists a set $N\in\mathcal N$ such that $N\subset O_x$ and $\{n\in\Omega:x_n\in N\}$ is infinite. It follows that $N\in\mathcal M$ and hence $N=N_k$ for some $k\in \omega$. Since $\Omega\setminus\Omega_k$ is finite, the set $\{n\in \Omega_k:x_n\in N=N_k\}$ is infinite. On the other hand, $\{n\in\Omega_k:x_n\in N_k\}\subset \{n\in\Omega_k:x_n\in M_k\}$ is empty by the definition of the set $\Omega_k$. This contradiction completes the proof.
\end{proof}

\section{Rectifiable spaces and topological lops}\label{s:rectolop}

In this section we discuss the relation of rectifiable spaces to topological-algebraic structures called topological lops. 
They are generalizations of topological loops \cite{HS} and are special kinds of topological magmas.

A {\em magma} is a set $X$ endowed with a binary operation $\cdot:X\times X\to X$. If this operation is associative then the magma is called a {\em semigroup}. For two elements $x,y\in X$ of a magma their product $\cdot(x,y)$ will be denoted by $x\cdot y$ or just $xy$.
An element $e\in X$ of a magma $X$ is called a {\em left unit} (resp. {\em right unit}\/) of $X$ if $ex=x$ (resp. $xe=x$) for all $x\in X$. An element $e\in X$ is a {\em unit} of $X$ if $xe=x=ex$ for all $x\in X$ (i.e., $e$ is both left and right unit in $X$).
It easy to see that any two units of a magma coincide, so a magma can have at most one unit.

For a point $a\in X$ of a magma $X$ the maps $L_a:X\to X$, $L_a:x\mapsto ax$, and $R_a:X\to X$, $R_a:x\mapsto xa$, are called  the {\em left shift} and the {\em right shift} of $X$ by the element $a$, respectively.

A magma $X$ is called
\begin{itemize}
\item a {\em loop} if $X$ has a unit $e$ and for every $a\in X$ the shifts $L_a:X\to X$ and $R_a:X\to X$ are bijective;
\item a {\em left-loop} if $X$ has a right unit $e$ and for every $a\in X$ the left shift $L_a:X\to X$ is bijective;
\item an {\em eleft-loop} if $X$ has a unit $e$ and for every $a\in X$ the left shift $L_a:X\to X$ is bijective.
\end{itemize}
It can be shown that a left-loop has a unique right unit $e$. In an eleft-loop this right unit is also a left unit. This justifies the prefix ``{\em eleft}'' -- $e$ is also a {\em left} unit. Since eleft-loops are central instruments in our subsequent considerations, and the term ``eleft-loop'' sounds a bit awkward, we will shorten it to ``lop'' (pretending that in ``loop'' the left ``o'' is responsible for left shifts whereas the right ``o'' for right shifts  --- so, ``lop'' is a ``loop'' with a removed right ``o'').

Therefore, a {\em lop} is a magma with unit and bijective left shifts. Equivalently, a lop can be defined as a magma with unit and bijective map $X\times X\to X\times X$, $(x,y)\mapsto (x,xy)$. On the other hand, a loop can be equivalently defined as a magma $X$ with unit and bijective maps $X\times X\to X\times X$, $(x,y)\mapsto (x,xy)$, and $X\times X\to X\times X$, $(x,y)\mapsto (x,yx)$.

These algebraic notions are related as follows:
$$\xymatrix{\mbox{associative left-loop}\ar@{<=>}[r]&\mbox{group}\ar@{=>}[r]&\mbox{loop}\ar@{=>}[r]&
\mbox{lop}\ar@{=>}[r]&\mbox{left-loop}.
}
$$
The first equivalence in this diagram is a standard exercise in  group theory (see, e.g. \cite[1.1.2]{Rob}).

For any points $a,b\in X$ of a lop $X$ the left shift $L_a:X\to X$ is a bijection of $X$, which implies that the set $a^{-1}b=\{x\in X:ax=b\}$ coincides with the singleton $\{L_a^{-1}(b)\}$. Sometimes it will be convenient to identify the singleton $a^{-1}b$ with the point $L_a^{-1}(b)$ thinking of $a^{-1}$ in the expression $a^{-1}b$ as the bijection $L_a^{-1}$ acting on the point $b$. Under this convention, for points $a,b,x$ of a lop the expression $a^{-1}b^{-1}x$ means $a^{-1}(b^{-1}(x))$ which is actually equal to $L_a^{-1}(L_b^{-1}(x))$. Analogously, we will understand longer expressions $a_1^{-1}a_2^{-1}\cdots a_n^{-1}x$ for points $a_1,\dots,a_n,x$ of a lop $X$. Observe that $x^{-1}x=e$ for  every $x\in X$.

The same convention concerns the multiplication of elements of a lop $X$: for elements $a_1,a_2,a_3\in X$ the expression $a_1a_2a_3$ means $a_1(a_2a_3)$, and so on. More precisely, for points $a_1,\dots,a_n\in X$ their product $a_1\cdots a_n$ is defined by induction as $a_1(a_2\cdots a_n)$ (since the multiplication of elements in a lop is not associative we should be careful with parentheses).

For two sets $A,B$ in a lop $X$ we put $AB=\{ab:a\in A,\;b\in B\}$ and $A^{-1}B=\{a^{-1}b:a\in A,\;b\in B\}$. More precisely, $A^{-1}B=\bigcup_{a\in A\;b\in B}a^{-1}b$.

A subset $A\subset X$ of a lop $X$ is called a {\em sublop} of $X$ if for any elements $a,b\in A$ we get $ab\in A$ and $a^{-1}b\in A$. By analogy we define subloops of loops.

Now we define the topological versions of the above notions. By a {\em topological magma} we will understand a topological space $X$ endowed with a continuous binary operation $\cdot:X\times X\to X$.
A topological magma $X$ is called
\begin{itemize}
\item a {\em topological left-loop} if $X$ has a right unit $e$ and the map $X\times X\to X\times X$, $(x,y)\mapsto (x,xy)$, is a homeomorphism;
\item a {\em topological lop} if $X$ has a unit $e$ and the map $X\times X\to X\times X$, $(x,y)\mapsto (x,xy)$, is a homeomorphism;
\item a {\em topological loop} is $X$ has a unit and the maps $X\times X\to X\times X,\;\;(x,y)\mapsto (x,xy)$,  and $X\times X\to X\times X$,  $(x,y)\mapsto (x,yx)$, are homeomorphisms.
\end{itemize}

\begin{remark} Topological loops are standard objects in (non-associative) topological algebra \cite{HS} whereas topological left-loops were recently introduced by Hofmann and Martin \cite{HM}. The notion of a topological lop seems to be new.
\end{remark}

It follows that for a topological left-loop $X$ with right unit $e$ the homeomorphism $H:X\times X\to X\times X$, $H:(x,y)\mapsto (x,y)$, witnesses that the topological space $X$ is rectifiable (since $H(x,e)=x$ and $H(\{x\}\times X)=\{x\}\times X$ for every $x\in X$).
We will show that the converse is also true: each rectifiable space is homeomorphic to a topological left-loop and even to a topological lop. Moreover, the rectifiability is equivalent to the continuous homogeneity defined as follows.

A topological space $X$ is defined to be {\em continuously homogeneous} if for any points $x,y\in X$ there is a homeomorphism $h_{x,y}:X\to X$ such that $h_{x,y}(x)=y$ and $h_{x,y}$ continuously depends on $x$ and $y$ in the sense that the map $H:X^3\to X^3$ defined by $H(x,y,z)=(x,y,h_{x,y}(z))$ is a homeomorphism.

More formally, a continuously homogeneous space can be defined as a space $X$ admitting a homeomorphism $H:X^3\to X^3$ such that $H(x,y,x)=y$ and $H(\{(x,y)\}\times X)=\{(x,y)\}\times X$ for any points $x,y\in X$. The equivalence $(1)\Leftrightarrow(4)$ in the following characterization was established by Uspenski\u\i\ \cite[Proposition 15]{Us}.

\begin{theorem}\label{t:rectilop} For a topological space $X$ the following conditions are equivalent:
\begin{enumerate}
\item $X$ is rectifiable;
\item $X$ is homeomorphic to a topological left-loop;
\item $X$ is homeomorphic to a topological lop;
\item $X$ is continuously homogeneous;
\end{enumerate}
\end{theorem}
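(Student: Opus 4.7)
The plan is to establish the cycle of implications $(3)\Rightarrow(2)\Rightarrow(1)\Rightarrow(4)\Rightarrow(3)$. The first two arrows are essentially tautological, and $(1)\Rightarrow(4)$ follows by an explicit formula; the substantive work is concentrated in $(4)\Rightarrow(3)$, where a topological-algebraic structure has to be extracted from purely topological data.

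The implication $(3)\Rightarrow(2)$ is immediate, since a unit is in particular a right unit, so every topological lop is a topological left-loop. For $(2)\Rightarrow(1)$, if $X$ is a topological left-loop with right unit $e$, then by definition the shear map $h:X\times X\to X\times X$, $h(x,y)=(x,xy)$, is a homeomorphism. It satisfies $h(x,e)=(x,xe)=(x,x)$ and preserves each fiber $\{x\}\times X$, which is exactly the definition of rectifiability with distinguished point $e$.

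For $(1)\Rightarrow(4)$, write the rectifying homeomorphism as $h(x,y)=(x,p(x,y))$, so that for each $x\in X$ the map $p_x:=p(x,\cdot):X\to X$ is a homeomorphism with $p_x(e)=x$, and the map $(x,z)\mapsto p_x^{-1}(z)$ is continuous (being the second coordinate of $h^{-1}$). Given any pair $(x,y)\in X^2$, define
\[
h_{x,y}:X\to X,\qquad h_{x,y}(z)=p\bigl(y,p_x^{-1}(z)\bigr).
\]
Then $h_{x,y}(x)=p(y,p_x^{-1}(x))=p(y,e)=y$, and the map $H(x,y,z)=(x,y,h_{x,y}(z))$ is continuous in all three variables. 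To see that $H$ is a homeomorphism of $X^3$, I would verify that $H\circ H=\mathrm{id}_{X^3}$ modulo swapping the roles of $x$ and $y$: the identity $h_{y,x}\circ h_{x,y}=\mathrm{id}_X$ follows at once from the formula. This yields a continuous two-sided inverse, establishing continuous homogeneity.

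The main obstacle is $(4)\Rightarrow(3)$, which is precisely Uspenski\u\i's argument that I would reproduce in our language. Fix a basepoint $e\in X$ and set $g_x:=h_{e,x}$, so that $g_x(e)=x$ and $(x,z)\mapsto g_x(z)$, $(x,z)\mapsto g_x^{-1}(z)$ are both continuous on $X\times X$ (the first by definition of $H$, the second because $H$ restricted to $\{e\}\times X\times X$ is a self-homeomorphism). Define a binary operation on $X$ by
\[
x\cdot y := g_x\bigl(g_e^{-1}(y)\bigr).
\]
Since $g_e(e)=e$, we get $e\cdot y = g_e(g_e^{-1}(y))=y$ and $x\cdot e = g_x(g_e^{-1}(e))=g_x(e)=x$, so $e$ is a two-sided unit. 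Each left shift $L_x=g_x\circ g_e^{-1}$ is a composition of homeomorphisms, hence a self-homeomorphism of $X$; in particular it is bijective. It remains to verify that the shear map $\Psi:(x,y)\mapsto(x,x\cdot y)$ is a homeomorphism of $X^2$. Continuity of $\Psi$ follows from joint continuity of $(x,y)\mapsto g_x(g_e^{-1}(y))$, and its inverse $(x,w)\mapsto(x,g_e(g_x^{-1}(w)))$ is continuous by the joint continuity of $(x,w)\mapsto g_x^{-1}(w)$ together with continuity of $g_e$. Thus $(X,\cdot)$ is a topological lop, which completes the cycle.
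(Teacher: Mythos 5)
Your proof is correct, and it is essentially the paper's argument run around the cycle in the opposite direction: you prove $(3)\Rightarrow(2)\Rightarrow(1)\Rightarrow(4)\Rightarrow(3)$ where the paper proves $(1)\Rightarrow(2)\Rightarrow(3)\Rightarrow(4)\Rightarrow(1)$. The constructions that carry the weight are the same up to notation. Your formula $h_{x,y}(z)=p\bigl(y,p_x^{-1}(z)\bigr)$ for $(1)\Rightarrow(4)$ becomes the paper's $H(x,y,z)=(x,y,y(x^{-1}z))$ for $(3)\Rightarrow(4)$ once one writes the rectification as a left-loop operation; and your $x\cdot y=g_x\bigl(g_e^{-1}(y)\bigr)$ for $(4)\Rightarrow(3)$ is exactly the paper's correction $p(x,y)=x(e^{-1}y)$ in $(2)\Rightarrow(3)$, which promotes the right unit to a two-sided unit, applied after first extracting the left shifts $g_x=h_{e,x}$ from the homogeneity data (the paper's $(4)\Rightarrow(1)$ followed by $(1)\Rightarrow(2)$). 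What your organization buys is that the trivial specializations $(3)\Rightarrow(2)\Rightarrow(1)$ are dispatched in two lines and the substantive content is concentrated in a single implication $(4)\Rightarrow(3)$; the cost is that each of your two nontrivial steps bundles what the paper splits into shorter, independently reusable pieces. All the continuity verifications you flag (joint continuity of $(x,z)\mapsto g_x^{-1}(z)$ via the restriction of $H$ to the invariant slice $\{e\}\times X\times X$, and the identity $h_{y,x}\circ h_{x,y}=\mathrm{id}_X$ giving the continuous inverse of $H$) are sound.
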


\begin{proof} (1)$\Ra$(2)
Assume that $X$ is rectifiable. Then there exists a point $e\in X$ and a homeomorphism $H:X\times X\to X\times X$ such that $H(x,e)=(x,x)$ and $H(\{x\}\times X)=\{x\}\times X$ for all $x\in X$. Let $\pr_2:X\times X\to X$, $\pr_2:(x,y)\mapsto y$, denote the coordinate projection. Then the space $X$ endowed with the binary operation $p=\pr_2\circ H:X\times X\to X$ is a topological left-loop with the right unit $e$.

$(2)\Ra(3)$ Assume that $X$ is a topological left-loop and let $e$ be the right unit of $X$. Define the continuous binary operations $p:X\times X\to X$ and $q:X\times X\to X$ by the formulas $p(x,y)=x(e^{-1}y)$ and $q(x,y)=e(x^{-1}y)$ for any $x,y\in X$. We claim that $X$ endowed with the binary operation $p$ is a topological lop. Indeed, the map $H:X\times X\to X\times X$, $H:(x,y)\mapsto(x,p(x,y))$, is a homeomorphism with inverse $H^{-1}:(x,y)\mapsto (x,q(x,y))$. It remains to check that $e$ is a unit of the magma $(X,p)$. For this observe that $L_e(e)=ee=e$ and hence $e^{-1}e=L_e^{-1}(e)=e$. Then for every $x\in X$ we get $p(x,e)=x(e^{-1}e)=xe=x$ and $p(e,x)=e(e^{-1}x)=x$.

(3)$\Ra$(4) Assume that $X$ is a topological lop. The continuous homogeneity of $X$ is witnessed by the homeomorphism $H:X^3\to X^3$ defined by
$H(x,y,z)=y(x^{-1}z)$ for $(x,y,z)\in X\times X\times X$.

(4)$\Ra$(1) Assuming that $X$ is continuously homogeneous, fix any homeomorphism $H:X^3\to X^3$ such that $H(x,y,x)=y$ and $H(\{(x,y)\}\times X)=\{(x,y)\}\times X$ for all points $x,y\in X$. For any points $x,y\in X$ let $h_{x,y}:X\to X$ be the homeomorphism of $X$ such that $H(x,y,z)=(x,y,h_{x,y}(z))$ for all $z\in X$.

Fix any point $e\in X$ and observe that the homeomorphism $h:X\times X\to X\times X$ defined by $h(x,y)=h_{e,x}(y)$ for $(x,y)\in X\times X$ witnesses that the space $X$ is rectifiable.
\end{proof}

It is well-known that each open subgroup of a topological group is closed. The same fact holds for topological lops.

\begin{proposition}\label{p:sublop} Each open sublop $H$ of a topological lop $G$ is closed in $G$.
\end{proposition}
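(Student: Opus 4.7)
The plan is to argue by contradiction, mimicking the classical proof that an open subgroup of a topological group is closed but adapted to the weaker lop setting, where only left shifts (not right shifts) are invertible. The essential continuity tool is the following consequence of the definition of a topological lop: the left division map $\mu\colon G\times G\to G$, $(x,y)\mapsto x^{-1}y$, is continuous, being the second coordinate of the inverse of the homeomorphism $(x,y)\mapsto(x,xy)$.

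First I would note that we may assume $H$ is nonempty, so that, by the sublop axiom applied with $a=b$, the unit $e=a^{-1}a$ belongs to $H$. Suppose toward a contradiction that $g\in\bar H\setminus H$. Since $e$ is a two-sided unit of the lop, $L_g(e)=ge=g$, and therefore $g^{-1}g=L_g^{-1}(g)=e\in H$. By continuity of the map $y\mapsto y^{-1}g$ at $y=g$, there is an open neighborhood $V$ of $g$ such that $y^{-1}g\in H$ whenever $y\in V$. Because $g$ belongs to the closure of $H$, the set $V\cap H$ is nonempty; I would choose $h\in V\cap H$ and set $k:=h^{-1}g$, so that $k\in H$. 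The sublop axiom then gives $hk\in H$, while the definition of $h^{-1}g$ gives $hk=h(h^{-1}g)=L_h(L_h^{-1}(g))=g$. Hence $g\in H$, contradicting the choice of $g$, and so $\bar H=H$.

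The main obstacle I anticipate is the lack of invertible right shifts: in a topological group or loop one would deduce $g\in H$ from $gh\in H$ via $g=(gh)h^{-1}$, but in a lop neither the right shift $R_h$ nor a symbol $h^{-1}$ acting from the right is available. The workaround is to go exclusively through left division, for which continuity \emph{is} guaranteed by the topological-lop axiom, and to choose $h$ close enough to $g$ that $h^{-1}g$ is forced into the open set $H$. A subtler point worth flagging is that the argument uses crucially that $e$ is a \emph{two-sided} unit, so that $g^{-1}g=e$; this is why the statement is formulated for topological lops and does not automatically extend to arbitrary topological left-loops.
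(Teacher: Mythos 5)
Your proof is correct and is essentially the paper's own argument: both use $x^{-1}x=e\in H$ together with the separate continuity of left division to find a neighborhood $V$ of the closure point with $V^{-1}x\subset H$, then pick $h\in V\cap H$ and recover $x=h(h^{-1}x)\in HH\subset H$. The remarks about why only left division is needed and why $e$ must be a two-sided unit are accurate but do not change the substance.
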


\begin{proof} Take any point $x\in\bar H$. Since $x^{-1}x=e\in H$, the (separate) continuity of the division operation yields a neighborhood $U_x\subset X$ of $x$ such that $U_x^{-1}x\subset H$. Choose any point $u\in U_x\cap H$ and observe that $u^{-1}x\in H$ and hence $x\in uH\subset HH=H$.
\end{proof}

We will use this simple fact to prove the following structural result. Let us recall that a topological space $X$ is {\em strongly paracompact}  if every open cover of  has a star-finite open refinement. By Smirnov Theorem 3.12 in \cite{Burke} each regular Lindel\"of space is strongly paracompact. Let us recall that for a topological space $X$ its {\em Lindel\"of number} $l(X)$ is defined as the smallest cardinal $\kappa$ such that each open cover of $X$ has a subcover of cardinality $\le \kappa$. A regular space $X$ is {\em cosmic} if it has countable network weight (equivalently, is a continuous image of a separable metric space).

\begin{theorem}\label{t:sum} If a topological lop $X$ contains an open cosmic sublop $H$, then $X$ admits a disjoint open cover refining the open cover $\{xH:x\in X\}$. Consequently, $X$ is a topological sum of cosmic subspaces and hence $X$ is submetrizable and strongly paracompact.
\end{theorem}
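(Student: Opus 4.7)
The plan is to use Proposition~\ref{p:sublop} to make all cosets $xH$ clopen cosmic, decompose $X$ into clopen cosmic subspaces via an equivalence relation on the coset cover, and then deduce the stated consequences by standard facts. Proposition~\ref{p:sublop} gives that the open sublop $H$ is clopen in $X$. Since every left shift $L_x:X\to X$ is a homeomorphism of the topological lop, each coset $xH=L_x(H)$ is clopen and homeomorphic to $H$; in particular every $xH$ is cosmic, and being a regular cosmic space it is hereditarily Lindel\"of. Thus $\{xH:x\in X\}$ is a clopen cover of $X$ by cosmic subsets.

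Next I would introduce the equivalence relation $\sim$ on $X$ generated by the reflexive symmetric relation $x\asymp y\Leftrightarrow xH\cap yH\neq\emptyset$, and for each $x\in X$ set $C_x:=[x]_\sim$. Each $C_x$ is a union of cosets, hence open, and its complement is a union of the remaining classes and so is also open, making $C_x$ clopen. The key claim is that every $C_x$ is cosmic. The natural approach is to exhibit $C_x$ as an ascending union of iterates $T_0:=xH$ and $T_{n+1}:=T_n\cdot H=\bigcup_{y\in T_n}yH$; since $T_{n+1}$ is the continuous image of the cosmic space $T_n\times H$ under lop multiplication, each $T_n$ is cosmic, and $T:=\bigcup_{n\in\w}T_n$ is cosmic as a countable union. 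The hard part will be to verify $T=C_x$: a chain witnessing $x\sim y$ can involve ``backward'' transitions where $w\in zH$ rather than $z\in wH$, and in a lop the right shift $R_h$ need not be injective, so such steps cannot be reduced to forward ones by inverting. Overcoming this obstacle will require using the continuity of left division $(a,b)\mapsto a^{-1}b$ together with the sublop closure of $H$ under $h\mapsto L_h^{-1}(e)$, for instance by enlarging the iteration to $T_{n+1}:=T_n\cdot H\cup\{z\in X:zH\cap T_n\neq\emptyset\}$ and checking that the enlarged iterates still have countable network weight and exhaust $C_x$.

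Once each $C_x$ is known to be clopen and cosmic, hence Lindel\"of, the clopen cover $\{yH:y\in C_x\}$ admits a countable subcover $\{y_nH\}_{n\in\w}$, and the standard disjointification $W_n:=y_nH\setminus\bigcup_{k<n}y_kH$ yields a partition of $C_x$ into clopen subsets with each $W_n\subseteq y_nH$. Assembling these partitions over all classes produces the disjoint open cover of $X$ refining $\{xH:x\in X\}$ required by the first assertion. Every piece of this cover is clopen cosmic, so $X$ is a topological sum of cosmic subspaces; regular cosmic spaces are submetrizable and Lindel\"of, hence strongly paracompact by the Smirnov theorem cited before the statement, and both properties pass to topological sums, yielding that $X$ is submetrizable and strongly paracompact.
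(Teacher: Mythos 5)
There is a genuine gap, and it sits exactly where you flagged it: the claim that each $\sim$-class $C_x$ is cosmic (or even Lindel\"of). The forward iteration $T_{n+1}=T_n\cdot H$ is fine, but the backward step is not repairable by the tools you invoke. If $zH\cap T_n\neq\emptyset$, then $zh=t$ for some $h\in H$ and $t\in T_n$; recovering $z$ from $t$ and $h$ means inverting the right shift $R_h$, and in a lop $R_h$ is merely continuous --- it need not be injective or surjective, and its fibers carry no cardinality bound. Hence $\{z\in X:zH\cap T_n\neq\emptyset\}=\bigcup_{h\in H}R_h^{-1}(T_n)$ has no controllable network weight; continuity of left division $(a,b)\mapsto a^{-1}b$ does not help, because left division recovers $h$ from $z$ and $t$, not $z$ from $h$ and $t$. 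For the same reason the classes $C_x$ can a priori fail to be Lindel\"of (uncountably many cosets $z_\alpha H$ may pass through a single point of $xH$), and without Lindel\"ofness of $C_x$ you cannot extract the countable subcover your disjointification needs --- disjointifying an uncountable open cover along a well-order does not in general produce open pieces.

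The paper circumvents this by never touching the ``backward'' relation: it runs a transfinite induction on the Lindel\"of number $l(G)$ of sublops $G$ containing $H$. The base case is your Lindel\"of argument verbatim (countable subcover of $\{xH:x\in G\}$ plus disjointification, using the clopenness of $H$ from Proposition~\ref{p:sublop}). For the inductive step it picks coset representatives $\{x_\alpha\}_{\alpha<\kappa}$ and forms the increasing transfinite chain of sublops $H_\alpha$ generated by $H\cup\{x_\beta\}_{\beta<\alpha}$. Since a generated sublop is built by iterating only multiplication and left division (the actual operations of the lop), one gets $l(H_\alpha)\le\nw(H_\alpha)<\kappa$, so the inductive hypothesis applies to each $H_\alpha$; each $H_\alpha$ is open (a union of cosets), hence clopen by Proposition~\ref{p:sublop}, so the layers $H_\alpha\setminus\bigcup_{\beta<\alpha}H_\beta$ are clopen and partition $G$, and the disjoint covers of the layers glue together. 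If you want to rescue your scheme, replace the $\sim$-classes by this well-ordered chain of generated sublops.
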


\begin{proof} By induction on cardinals $\alpha\le l(X)$ we will prove that each sublop $G\subset X$ with Lindel\"of number $l(G)\le\alpha$ containing the open sublop $H$ admits a disjoint open cover refining the open cover $\{xH:x\in G\}$.

To start the induction, assume that $G$ is a Lindel\"of sublop of $X$ containing the open sublop $H$. Since the space $G$ is Lindel\"of, the open cover $\{xH:x\in G\}$ has a countable subcover $\{x_nH\}_{n\in\w}$ for some sequence $\{x_n\}_{n\in\w}\subset G$. For every $n\in\w$ put $U_n=x_nH\setminus \bigcup_{k<n}x_kH$. By Proposition~\ref{p:sublop}, the open sublop $H$ is clopen in $X$, which implies that $U_n\subset x_nH$ is a clopen subset of $X$. So, $\U=\{U_n\}_{n\in\w}$ is a disjoint open cover of $X$ refining the cover $\{xH:x\in G\}$.

Now assume that for some uncountable cardinal $\kappa\le l(X)$ we have proved that any sublop $G\subset X$ with $l(G)<\kappa$ and $H\subset G$ admits a disjoint open cover refining the cover $\{xH:x\in G\}$.

Fix a sublop $G\subset X$ containing $H$ and having Lindel\"of number $l(G)=\kappa$. Choose a subset $D\subset G$ of cardinality $|D|\le l(G)\le\kappa$ such that $G=\bigcup_{x\in D}xH$.
Let $\{x_\alpha\}_{\alpha<\kappa}$ be an enumeration of the set $D$. For every ordinal $\alpha<\kappa$ denote by $H_\alpha$ the smallest sublop of $X$ containing the set $D_\alpha=H\cup\{x_\beta\}_{\beta<\alpha}$. Taking into account that each point $x\in H_\alpha$ has open neighborhood $xH\subset H_\alpha$ in $X$, we conclude that the sublop $H_\alpha$ is open in $X$. By Proposition~\ref{p:sublop}, the sublop $H_\alpha$ is closed. Observe that the space $D_\alpha$ has network weight $\nw(D_\alpha)<\kappa$. Since $H_\alpha$ is a continuous image of the space $\bigcup_{n\in\w}D_\alpha^n$, we conclude that $l(H_\alpha)\le\nw(H_\alpha)\le\max\{\aleph_0,\nw(D_\alpha)\}<\kappa$. Then by the inductive assumption, the sublop $H_\alpha$ admits a disjoint open cover $\U_\alpha$ refining the cover $\{xH:x\in H_\alpha\}$. Observe that the union $H_{<\alpha}=\bigcup_{\beta<\alpha}H_\beta$ is an open sublop of $X$ (being the union of the increasing chain of open sublops $(U_\beta)_{\beta<\alpha}$). By Proposition~\ref{p:sublop}, the sublop $H_{<\alpha}$ is closed in $X$. Then $\V_\alpha=\{U\setminus H_{<\alpha}:U\in\U_\alpha\}$ is a disjoint open cover of the space $H_\alpha\setminus H_{<\alpha}$ refining the cover $\U$. Unifying the covers $\V_\alpha$, $\alpha<\kappa$, we obtain the disjoint open cover $\V=\bigcup_{\alpha<\kappa}\V_\alpha$ of $G$ refining the cover $\{xH:x\in G\}$ of $G$.
\end{proof}

\section{Closed copies of the metric and Fr\'echet-Urysohn fans in rectifiable spaces}\label{s:fans}

 Banakh \cite{Ban98} proved that {\em a normal topological group $G$ is not sequential if $G$ contains closed copies of the fans $M_\w$ and $S_\w$.}  This result was extended in \cite[Lemma 4]{BZ} to arbitrary topological groups and  to all normal rectifiable spaces in \cite[Theorem 3.1]{LLL}.
In this section we will further generalize this Banakh's result to so-called $e$-normal topological lops, in particular, all topological lops of countable $\cs^*$-character.

\begin{definition}
 A topological lop $X$ is called {\em $e$-normal} if for any infinite closed discrete set $\{x_n\}_{n\in\w}$ in $X$ and any sequence $(y_m)_{m\in\w}$ in $X$ converging to $e$ there are increasing number sequences $(n_k)_{k\in\w}$ and $(m_k)_{k\in\w}$ such that the set $\{x_{n_k}y_{n_k}\}_{k\in\w}$ is sequentially closed in $X$.
\end{definition}

\begin{lemma}\label{l:enorm} A topological lop $X$ is $e$-normal if the topological space $X$ is normal or has countable $\cs^*$-character at $e$.
\end{lemma}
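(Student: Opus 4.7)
The proof treats the two sufficient conditions separately. \emph{Normal case.} A countable closed discrete set in a normal space admits a discrete open expansion (a standard iterative use of normality), so we may fix a discrete family of open sets $\{V_n\}_{n\in\w}$ with $x_n\in V_n$. Continuity of the multiplication at $(x_n,e)$ produces open neighborhoods $W_n$ of $e$ with $x_nW_n\subseteq V_n$. Since $y_m\to e$, we choose inductively an increasing sequence $(m_k)$ with $y_{m_k}\in W_k$, and set $n_k:=k$. Each $x_{n_k}y_{m_k}$ lies in $V_k$; the discreteness of $\{V_k\}_{k\in\w}$ then makes $\{x_{n_k}y_{m_k}:k\in\w\}$ closed and discrete in $X$, hence sequentially closed.

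\emph{Countable $\cs^*$-character case.} By Lemma~\ref{l:cs=cs*} fix a countable $\cs$-network $\mathcal N=\{B_i\}_{i\in\w}$ at $e$, closed under finite unions. For every $n$, applying continuity of the left shift $L_{x_n}$ and the $\cs$-network property to the open neighborhood $X\setminus\{x_k:k\ne n\}$ of $x_n$ and to the sequence $(y_m)$, we obtain indices $i(n),M(n)\in\w$ with $x_nB_{i(n)}\cap\{x_k:k\ne n\}=\emptyset$ and $y_m\in B_{i(n)}$ for every $m\ge M(n)$. We build $(n_k),(m_k)$ by induction: at stage $k$, given $n_0<\cdots<n_{k-1}$ and $m_0<\cdots<m_{k-1}$ with products $p_j:=x_{n_j}y_{m_j}$, we pick $n_k>n_{k-1}$ with $x_{n_k}\notin\{p_0,\ldots,p_{k-1}\}$, refine $i(n_k)$ so that $x_{n_k}B_{i(n_k)}$ also avoids $\{p_0,\ldots,p_{k-1}\}$, and choose $m_k>\max(m_{k-1},M(n_k))$ with $y_{m_k}\in B_{i(n_k)}$. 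The products $p_k$ are then pairwise distinct.

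To verify that $S:=\{p_k:k\in\w\}$ is sequentially closed, suppose a sequence of distinct $p_{k_j}$ converges to some $z\in X$. Since $(n_{k_j})$ is strictly increasing, $(x_{n_{k_j}})$ is injective in the closed discrete set $\{x_n\}$ and admits no convergent subsequence in $X$. Homogeneity of the lop $X$ (Theorem~\ref{t:rectilop}) supplies a countable $\cs$-network at $z$ via left translation of $\mathcal N$; combining the fact that this network captures a tail of $(p_{k_j})$ inside a fixed set $zB_{i^*}$ with the constraints $y_{m_{k_j}}\in B_{i(n_{k_j})}$, $y_{m_{k_j}}\to e$, and with the homeomorphism $(a,b)\mapsto(a,ab)$, a diagonal argument using $\mathcal N$ produces the needed contradiction and forces $z\in S$. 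The main obstacle is this second case: without normality we have no discrete open expansion of $\{x_n\}$, and the countable $\cs$-network at $e$---transported to every point by homogeneity---is the only available separation device. The delicate step is the diagonal argument yielding the final contradiction, which relies essentially on the countability of $\mathcal N$ and on the invertibility of the left shifts in the lop.
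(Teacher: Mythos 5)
Your normal case is essentially sound and close to the paper's: the paper extends $f\colon x_n\mapsto n$ to a continuous $\bar f\colon X\to\IR$ by Tietze--Urysohn and uses it to kill accumulation points, which is exactly the device that justifies your claimed \emph{discrete} open expansion of $\{x_n\}_{n\in\w}$ (note that a bare iterative use of normality only yields a \emph{disjoint} open expansion, which would not suffice --- a limit point outside $\bigcup_nV_n$ is not excluded; you need the expansion to be discrete or at least locally finite, and for that you really are invoking a Urysohn/Tietze function). So that half is fine modulo making the source of discreteness explicit.

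The second case, however, has a genuine gap, and it sits exactly at the heart of the lemma. Your inductive choice only forces $x_{n_k}B_{i(n_k)}$ to avoid the other $x_j$'s and the finitely many earlier products $p_0,\dots,p_{k-1}$; nothing in the construction prevents a subsequence of $(p_k)$ from converging to a point $z\notin S\cup D$. The step you defer to ``a diagonal argument using $\mathcal N$'' is precisely what cannot be recovered from your setup: in a lop there is no right division, so from $p_{k_j}\to z$ and $y_{m_{k_j}}\to e$ you cannot conclude that $x_{n_{k_j}}$ converges (which is the shortcut available in a topological group). The paper's proof instead enumerates $\mathcal N\times\mathcal N$ as $\{(A_l,B_l)\}_{l\in\w}$ and chooses $m_k$ so that $x_ky_{m_k}$ avoids \emph{every} closed set $\overline{((x_iy_{m_i})A_l)B_l}$ with $i<k$, $l\le k$ whose closure misses $x_k$; then, given a convergent sequence $S\to a$ with $S\setminus\{a\}\subset D'$, it applies the $\cs^*$-property to both $a^{-1}S$ and $S^{-1}a$ to trap a tail of $S$ inside some $((x_iy_{m_i})A_l)B_l$ whose closure misses $D\setminus\{a\}$, contradicting that avoidance. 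Without building this look-ahead avoidance of the product sets into the induction, the final contradiction is not available, so your argument for the countable $\cs^*$-character case does not go through as written.
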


\begin{proof} To prove that $X$ is $e$-normal, fix an infinite closed discrete set $D=\{x_n\}_{n\in\w}$ in $X$ and a sequence $(y_m)_{m\in\w}$ in $X$ converging to $e$. We lose no generality by assuming that the points $x_n$, $n\in\w$, are pairwise distinct. If the set $\{m\in\w:y_m=e\}$ is infinite, then we can choose an increasing number sequence $(m_k)_{k\in\w}$ such that $\{m_k\}_{k\in\w}=\{m\in\w:y_m=e\}$ and conclude that the set $\{x_k,y_{m_k}\}_{k\in\w}=D$ is (sequentially) closed in $X$. So, we can assume that $y_m\ne e$ for all $m\in\w$.

1. If the space $X$ is normal, then by the Tietze-Urysohn Theorem~\cite[2.1.8]{En}, the (continuous) map $f:D\to \w\subset\IR$, $f:x_n\mapsto n$, extends to a continuous map $\bar f:X\to\IR$. Since for every $k\in\w$ the sequence $\big(f(x_ky_m)\big)_{m\in\w}$ converges to $f(x_k)=k$, we can choose a number $m_k\in\w$ such that $|f(x_ky_{m_k})-k|<\frac13$ and moreover $m_{k}>m_{k-1}$ if $k>0$. The continuity of the function $f$ guarantees that the sequence $(x_ky_{m_k})_{k\in\w}$ has no accumulation points in $X$ (since its image $\big(f(x_ky_{m_k})\big)_{k\in\w}$ has no accumulation points in $\IR$). Consequently, the set $\{x_ky_{m_k}:k\in\w\}$ is (sequentially) closed in $X$.

2. Now assume that the space $X$ has countable $\cs^*$-character at $e$ and fix a countable $\cs^*$-network $\mathcal N$ at $e$ such that each set $N\in\mathcal N$ contains the point $e$. Let $\{(A_l,B_l)\}_{l\in\w}$ be an enumeration of the countable family $\mathcal N\times\mathcal N$.
For numbers $k,m,l\in\w$ let $\overline{((x_ky_m)A_l)B_l}$ be the closure of the set $((x_ky_m)A_l)B_l$ in the topological lop $X$.

Since the (non-trivial) sequence $(x_0y_{m})_{m\in\w}$ converges to $x_0$, we can find a number $m_0\in\w$ such that $x_0y_{m_0}\notin D$.
For every $k\in\IN$  choose by induction a number $m_k>m_{k-1}$ such that the point $x_ky_{m_k}$ does not belong to the closed set
$$D_k=D\cup\bigcup\big\{\overline{((x_iy_{m_i})A_l)B_l}:i<k, \;l\le k,\;\;x_k\notin \overline{((x_iy_{m_i})A_l)B_l} \big\}.$$
We claim that the set $D'=\{x_ky_{m_k}\}_{k\in\w}$ is sequentially closed in $X$. Assuming the converse, we can find a convergent sequence $S\subset X$ such that $S\setminus\{\lim S\}\subset D'$ but $\lim S\notin D'$. Let $a=\lim S$ be the limit point of $S$. Since the set $D$ is closed and discrete in the regular space $X$, the point $a$ has a closed neighborhood $W_a\subset X$ such that $W_a\cap D\subset\{a\}$. Since $(ae)e=a\in W_a$ we can use the continuity of the multiplication in the topological lop $X$ and find two open sets $U_a\ni a$ and $U_e\ni e$ in $X$ such that $(U_aU_e)U_e\subset W_a$. Since the sequence $a^{-1}S$ converges to $e$, there is a set $B\in\mathcal N$ such that $B\subset U_e$ and the set $a^{-1}S\cap B$ is infinite. Replacing the sequence $S$ by its subsequence $S\cap aB$, we can assume that $S\subset aB$.
Observe that for each $z\in S\setminus\{a\}$ we get $z^{-1}a\ne e$ (in the opposite case, $a=ze=z$, which contradicts the choice of $z$). This implies that $S^{-1}a$ is an infinite sequence convergent to $e$. Then the $\cs^*$-network $\mathcal N$ contains a set $A\subset U_e$ that has infinite intersection with the sequence $S^{-1}a$. Since $S$ converges to $a\in U_a$, we can select a point $z=x_iy_{m_{i}}\in U_a\cap S\setminus\{a\}$ such that $z^{-1}a\in A\subset U_e$ and hence $a\in zA\subset U_aU_e$. Then $S\subset aB\subset (zA)B\subset (U_aU_e)U_e\subset W_e$ and hence $\overline{(zA)B}\subset \overline{W}_e=W_e\subset X\setminus (D\setminus\{a\})$. Find a number $l\in\w$ such that $(A,B)=(A_l,B_l)$.
 Since $S\setminus\{a\}\subset D'$ is infinite, we can find a number $k>\max\{i,l\}$ such that $x_k\ne a$ and  $x_ky_{m_k}\in S\subset (zA)B=((x_iy_{m_i})A_l)B_l$ but this contradicts the (inductive) choice of the number $m_k$ (as $x_k\notin \overline{(zA)B}$ and hence $x_ky_{m_k}\notin \overline{(zA)B}$~).
 This contradiction completes the proof of $e$-normality of $X$.
\end{proof}

The following theorem generalizes Theorem 4 of \cite{Ban98}, Lemma~4 of \cite{BZ} and Theorem 3.1 of \cite{LLL} to $e$-normal topological lops.

\begin{theorem}\label{t:fans} If an $e$-normal topological lop $X$ contains closed copies of the fans $M_\w$ and $S_\w$, then $X$ is not sequential.
\end{theorem}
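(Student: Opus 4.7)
The plan is a proof by contradiction: assume $X$ is sequential and produce a subset of $X$ that is sequentially closed but not closed, violating the sequential hypothesis. Since every left shift $L_c:x\mapsto cx$ of a topological lop is a homeomorphism of $X$ (Theorem~\ref{t:rectilop}), I would translate each given closed copy independently by a suitable left shift, so that the unique non-isolated points of both fans coincide with $e$. After this normalization we have closed subsets $M=\{e\}\cup\{x_{m,n}:m,n\in\IN\}$ and $S=\{e\}\cup\{y_{m,n}:m,n\in\IN\}$, with bars $I_m=\{e\}\cup\{x_{m,n}:n\in\IN\}$ convergent to $e$ in the metric fan sense, and bars $T_m=\{e\}\cup\{y_{m,n}:n\in\IN\}$ convergent to $e$ in the Fr\'echet-Urysohn fan sense.

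Next, I would pick $z_m=x_{m,n(m)}\in I_m$ with $n(m)\to\infty$, so that $z_m\to e$ in $X$ and $z_m\neq e$ for each $m$, and form the product set $A=\{z_m\cdot y_{m,n}:m,n\in\IN\}$. By continuity of the multiplication at $(e,e)$, for any neighborhood $U$ of $e$ there is a neighborhood $V\ni e$ with $VV\subset U$; then for all sufficiently large $m$ one has $z_m\in V$, and for each such $m$, all sufficiently large $n$ give $y_{m,n}\in V$, hence $z_my_{m,n}\in U$. Thus $e\in\overline A$. The key step is to show that no sequence in $A$ converges to $e$: given a putative $a_k=z_{m_k}y_{m_k,n_k}\to e$ with pairwise distinct terms, a Ramsey-type extraction leaves two cases. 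In case (i), $(m_k)$ is eventually constant, say equal to $m$; continuity of the left-division by the fixed $z_m$ gives $y_{m,n_k}=z_m^{-1}a_k\to z_m^{-1}$, but $(y_{m,n_k})$ lies in the compact convergent sequence $T_m$ whose unique accumulation point is $e$ and the $n_k$ are pairwise distinct, so $z_m^{-1}=e$, i.e.\ $z_m=e$, contradicting our choice. In case (ii), $m_k\to\infty$, and joint continuity of $(b,c)\mapsto b^{-1}c$ combined with $z_{m_k}\to e$ and $a_k\to e$ gives $y_{m_k,n_k}=z_{m_k}^{-1}a_k\to e$, producing in the closed Fr\'echet-Urysohn fan $S$ a diagonal sequence with $m_k\to\infty$ converging to the vertex, contradicting the defining property of $S_\omega$. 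An entirely analogous argument (applied to the equation $z_my_{m,n}=e$, which forces $y_{m,n}=z_m^{-1}$) shows that after discarding finitely many $m$ one has $e\notin A$.

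To turn this into a genuine violation of sequentiality, one must replace $A$ by a sequentially closed set still witnessing $e\in\overline{\phantom{A}}\setminus\phantom{A}$, and this is where the hypothesis of $e$-normality enters. The naive sequential closure of $A$ cascades: the row-limits $z_m$ lie in $\overline A\setminus A$, and then the sequence $(z_m)$ itself converges to $e$, so one iteration of sequential closure already absorbs $e$, and a direct argument fails. To circumvent this cascade, I would apply $e$-normality to the closed discrete set $\{x_{m,1}:m\in\IN\}\subset M$ (which is closed discrete in $M_\omega$ because in $\IR^2$ it accumulates only at $(1,0)\notin M_\omega$, hence closed discrete in $X$) paired with a sequence $(y_{1,k})_{k\in\IN}\to e$ from the first bar of $S$, obtaining increasing indices $(n_k),(l_k)$ such that the set $B=\{x_{n_k,1}\cdot y_{1,l_k}:k\in\IN\}$ is sequentially closed in $X$. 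The analysis of the preceding paragraph, transposed from $A$ to $B$, shows that $\overline B\ni e$ while $e\notin B$. Since $X$ is sequential and $B$ is sequentially closed, $B$ must be closed, contradicting $e\in\overline B\setminus B$.

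The main obstacle is the final coordination step: tying the $e$-normality output to the Fr\'echet-Urysohn fan diagonal obstruction so that the product set $B$ supplied by $e$-normality simultaneously (a) is sequentially closed, which is automatic, and (b) still has $e$ in its topological closure via the height-$2$ multiplication-tree mechanism of Step~2. The heart of the argument is the FU-fan diagonal property (no diagonal sequence $(y_{m_k,n_k})$ with $m_k\to\infty$ converges to the vertex), which converts under left-division continuity into the non-convergence of sequences in the product set to $e$, while the metric-fan structure supplies the closed discrete sets required to feed into the $e$-normality hypothesis.
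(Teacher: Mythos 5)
There is a genuine gap in the final step, and it is fatal to the architecture of your contradiction. You apply $e$-normality once, to the single closed discrete set $\{x_{m,1}\}_{m\in\IN}$ (the first ``vertical line'' of the metric fan, accumulating only at $(1,0)$), and claim that the resulting sequentially closed set $B=\{x_{n_k,1}\cdot y_{1,l_k}:k\in\IN\}$ satisfies $e\in\overline{B}$. It does not: the first factors $x_{n_k,1}=\varphi(1,1/n_k)$ stay bounded away from $e$ (they form a closed discrete set with no accumulation point in $X$), and the second factors tend to $e$, so the products cluster, if anywhere, only near that divergent line --- your own preliminary step placing $x\cdot y$ inside small neighborhoods $O(x)\subset X\setminus\{e\}$ makes this explicit. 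The obstruction is structural, not reparable by a better choice of line: any set to which $e$-normality applies must have closed discrete first factors, and a closed discrete set cannot accumulate at $e$; hence no \emph{single} application of $e$-normality can produce a sequentially closed set with $e$ in its closure, and the contradiction ``$B$ sequentially closed in a sequential space, yet $e\in\overline{B}\setminus B$'' cannot be reached this way. Your earlier set $A=\{z_m y_{m,n}\}$ does have $e\in\overline{A}$, but, as you correctly observe, it is not sequentially closed (the cascade through the row-limits $z_m$), so it yields no contradiction either; the proposed hand-off from $A$ to $B$ transfers neither property.

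The paper resolves exactly this tension by using the double indexing of $M_\w$: it applies $e$-normality \emph{countably many times}, once for each vertical line $\{x_{n,m}\}_{m\in\IN}$ (closed discrete for fixed $n$) paired with the $n$-th bar $(y_{n,m})_{m}$ of $S_\w$, obtaining sequentially closed sets $A_n$. The union $A=\bigcup_n A_n$ clusters at $e$ because the lines collectively shrink to the vertex as $n\to\infty$, even though each line separately is discrete. The contradiction is then \emph{not} that $A$ is sequentially closed (it is not); rather, sequentiality gives a sequence $S\subset A$ converging to some $a\notin A$, and since each $A_n$ is sequentially closed, $S$ meets each $A_n$ finitely, hence is diagonal: $S=\{x_{n_i,m_i}y_{n_i,m_i'}\}$ with $n_i$ strictly increasing. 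Then $x_{n_i,m_i}\to e$, so by continuity of division $y_{n_i,m_i'}\to a$; but a diagonal sequence in the closed copy of $S_\w$ is closed and discrete in $X$ and cannot converge --- this contradicts the closedness of $\psi(S_\w)$, not the sequential closedness of any single $B$. You have the right local ingredients (the diagonal obstruction in $S_\w$, division continuity, feeding closed discrete sets into $e$-normality), but the limit point $a$ need not be $e$, and the global mechanism must route the contradiction through the closed embedding of $S_\w$ rather than through a single sequentially closed product set containing $e$ in its closure.
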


\begin{proof}
 To derive a contradiction, assume that the space $X$ is sequential. By our assumptions, there are closed topological embeddings $\varphi:M_\w\to X$ and $\psi:S_\w\to X$.
 Since $X$ is topologically homogeneous, we can additionally assume that $\varphi(0,0)=e=\psi(0,0)$.
 For every $n,m\in\IN$ consider the points $x_{n,m}=\varphi\big(\frac1n,\frac1{nm}\big)$ and $y_{n,m}=\psi\big(\frac1m,\frac1{nm}\big)$. It follows that for every $n\in\w$, the sequence $(x_{n,m})_{m\in\w}$ has no accumulation point in $X$ whereas the sequence $(y_{n,m})_{m\in\w}$ converges to $e$.

Using the regularity of the space $X$, we can choose for every $n,m\in\IN$  a neighborhood $O(x_{n,m})\subset X\setminus\{e\}$ of the points $x_{n,m}$ such that the family $\big(O(x_{n,m})\big)_{n,m\in\w}$ is disjoint. Replacing each sequence $(y_{n,m})_{m\in\IN}$ by a suitable subsequence, we can assume that $x_{n,m}\cdot y_{n,k}\in O(x_{n,m})$ for every $n,m\in\IN$ and $k\ge m$.

Using the $e$-normality of the topological lop $X$, we can select for every $n\in\w$  an infinite subset $\Omega_n\subset\w$ and an increasing function $f_n:\Omega_n\to\w$ such that the set $A_n=\{x_{n,k}\cdot y_{n,f_n(k)}:k\in\Omega_n\}$ has finite intersection with any convergent sequence in $X$. Since $x_{n,k}\cdot y_{n,f_n(k)}\in O(x_{n,k})$ the points $x_{n,k}y_{n,f_n(k)}$, $n,k\in\IN$, are pairwise distinct and not equal to $e$.

Consider
the subset $A=\bigcup_{n\in\w}A_n$. Using
the continuity of the multiplication in $X$, one can show that $e\not\in A
$ is a cluster point of $A$ in $X$. Consequently, the set $A$ is
not closed and by the sequentiality of $X$, there is a sequence
$S\subset A$ converging to a point $a=\lim S\notin A$. Since every set
$A_n$ has finite intersection with $S$, we may replace $S$ by a
subsequence, and assume that $|S\cap A_n|\le 1$ for every $n\in
{\mathbb N}$. Consequently, $S$ can be written as
$S=\{x_{n_i,m_i}\cdot y_{n_i,m_i}:i\in\omega\}$ for some number
sequences $(m_i)$ and $(n_i)$ with $n_{i+1}>n_i$ for all $i$. It
follows (from the topological structure of the metric fan $M_\w$) that the sequence $(x_{n_i,m_i})_{i\in\omega}$ converges
to $e$ and consequently, the sequence
$T=\{y_{n_i,m_i}\}_{i\in\omega}=
\{x_{n_i,m_i}^{-1}x_{n_im_i}y_{n_i,m_i}\}_{i\in\w}$ converges to $e^{-1}(a)=e^{-1}(ea)=a$.
The definition of the topology of the Fr\'echet-Urysohn fan $S_\w$ guarantees that the sequence $T$ does not converge to $e$. Since $\psi(S_\w)\setminus\{e\}$ is a discrete space, the point $a$ does not belong to $\psi(S_\w)$, which means that $\psi(S_\w)$ is not closed in $X$. But this contradicts the choice of the closed embedding $\psi$.
\end{proof}

Lemma~\ref{l:enorm} and Theorem~\ref{t:fans} imply the following

\begin{corollary}\label{c:fans} If a topological lop $X$ has countable $\cs^*$-character and contains closed copies of the fans $M_\w$ and $S_\w$, then the space $X$ is not sequential.
\end{corollary}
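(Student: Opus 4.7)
The plan is to observe that this corollary is essentially a direct combination of Lemma~\ref{l:enorm} and Theorem~\ref{t:fans}, and to write only the brief glue between them. The hypothesis that $X$ has countable $\cs^*$-character means, by definition of $\cs^*_\chi(X)=\sup_{x\in X}\cs^*_\chi(X,x)$, that $\cs^*_\chi(X,e)\le\aleph_0$; in particular, $X$ has countable $\cs^*$-character at the distinguished point $e$.

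The second step is to feed this into Lemma~\ref{l:enorm}: its hypothesis is satisfied ("$X$ has countable $\cs^*$-character at $e$"), so $X$ is an $e$-normal topological lop. The third step is then a direct application of Theorem~\ref{t:fans}: since $X$ is an $e$-normal topological lop containing closed copies of both fans $M_\w$ and $S_\w$, the space $X$ cannot be sequential. There is no obstacle here beyond making these invocations explicit, since all the heavy lifting (the $\cs^*$-network diagonalisation used to secure $e$-normality, and the fan-based argument producing a non-closed but sequentially closed set) has already been done in the two preceding results.
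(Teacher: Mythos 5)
Your proposal is correct and is exactly the paper's own (implicit) argument: the paper states that Corollary~\ref{c:fans} follows from Lemma~\ref{l:enorm} and Theorem~\ref{t:fans}, and you have supplied precisely the intended glue, namely that countable $\cs^*$-character at $e$ yields $e$-normality via Lemma~\ref{l:enorm}, after which Theorem~\ref{t:fans} applies directly.
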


\section{The Key Lemma}\label{s:key}

In this section we prove the key lemma that will be used in the proof of Theorem~\ref{main}. We will say that a topological space $X$ is {\em $S_\w$-vacuous} if $X$ contains no closed topological copy of the Fr\'echet-Urysohn fan $S_\w$.

\begin{keylemma}\label{l:key} Let $G$ be a topological lop and $F\subset G$ be a subset containing the unit $e$ of $G$. Put $F_1=F$ and $F_{n+1}=F_n^{-1}F_n$ for $n\in\IN$.
\begin{enumerate}
\item If $F$ is an $S_\w$-vacuous sequential space and each space $F_n$, $n\in\IN$, has a countable $\cs$-network at $e$, then $F$ has a countable $sb$-network at $e$.
\item If $F$ is sequential and each space $F_n$, $n\in\IN$, has countable $sb$-network at $e$, then $F$ is first countable at $e$.
\item If every space $F_n$, $n\in\IN$, is an $S_\w$-vacuous sequential space of countable $\cs^*$-character at $e$, then $F$ is first countable at $e$.
\end{enumerate}
\end{keylemma}

\begin{proof} 1. Assume that $F$ is an $S_\w$-vacuous sequential space and each space $F_n$, $n\in\IN$, has countable $\cs$-network at $e$. Then we can find a countable family $\mathcal{A}$ of subsets of $G$, which is a $\cs$-network at $e$ for each space $F_n$, $n\in\IN$. We can enlarge the family $\mathcal A$ to a countable family which contains all sets $F_n$, $n\in\IN$, and is closed under finite intersections, finite unions and taking products in the lop $G$.

We claim that the countable collection $\mathcal{A}|F=\{A\in \A:A\subset F\}$ is a $\mathrm{sb}$-network at $e$ in $F$.
Assuming the opposite, we could find an open neighborhood $
U\subset G $ of $ e $ such that no set $ A\in\mathcal{A}|F$
with $ A\subset U $ is a
sequential barrier at $ e $ in $ F $.

Consider the countable subfamily $\mathcal{A}'=\{A\in\mathcal{A}:A\cap F\subset U\}$ and let $\{A_n
:n\in\omega\}$ be an enumeration of $\A'$. For every $n\in\w$ put $ B_n =\bigcup_{k\le n}A_{k}$.

Let $m_{0}=0$ and $U_{0}\subset U$ be any closed neighborhood of $e$ in $G$. By
induction, for every $k\in\IN$ find a number $m_k>m_{k-1}$, a
closed neighborhood $U_k\subset U_{k-1}$ of $e$ in $G$, and a
sequence $(x_{k,i})_{i\in\IN}$ converging to $e$ such that for every $k\in\IN$ the
following conditions are satisfied:
\begin{itemize}
\item[(a)] $\{x_{k,i}:i\in\IN\}\subset U_{k-1}\cap F\setminus
B_{m_{k-1}}$;
\item[(b)] the set $E_k=\{x_{n,i}: n\le k,\; i\in\IN\}\setminus
B_{m_k}$ is finite;
\item[(c)] $U_k\cap(E_k\cup\{x_{i,j}:i,j\le k\})=\emptyset$ and
$U_kU_k\subset U_{k-1}$.
\end{itemize}

By induction we will prove that for every $k\in\IN$ and every $i\le k$ we get $U_i\cdots U_k\subset U_{i-1}$. For $i=k$ this follows from the inclusion $U_k=eU_k\subset U_kU_k\subset U_{k-1}$.
Assume that for some $i<k$ we have proved that $U_{i+1}\cdots U_k\subset U_{i}$. Then $U_{i}\cdots U_k=U_{i}(U_{i+1}\cdots U_k)\subset U_{i}U_{i}\subset U_{i-1}$ by the inductive assumption and the choice of $U_i$. Consequently,
$$U_1\cdots U_k\subset U_0\subset U.$$

It follows that the subspace $X=\{x_{k,i}:k,i\in\IN\}$ of $F$ is discrete and hence open in its closure $\bar X$ in $F$. Therefore the remainder $\bar X\setminus X$ is
closed in $F$.

\begin{claim}\label{cl2} The point $e$ is isolated in $\bar
X\setminus X$.
\end{claim}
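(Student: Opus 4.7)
I proceed by contradiction. Suppose $e$ is not isolated in $\bar X\setminus X$. Since $X$ is open in $\bar X$, the subspace $\bar X\setminus X$ is closed in the sequential space $F$ and is therefore itself sequential; hence the non-isolation of $e$ produces a sequence $(z_n)_{n\in\w}\subset\bar X\setminus X\setminus\{e\}$ with $z_n\to e$ in $F$. The ultimate goal is to build a closed topological copy of the Fr\'echet--Urysohn fan $S_\w$ inside $F$, contradicting the assumption that $F$ is $S_\w$-vacuous.

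Using the regularity of $F$ and the fact that $\bigcup_{k\le n}T_k\cup\{e\}$ (with $T_k=\{x_{k,i}:i\in\IN\}$) is closed in $F$ and does not contain $z_n$, I pick pairwise disjoint open neighborhoods $V_n\ni z_n$ with $e\notin\overline{V_n}$ and $V_n\cap\bigcup_{k\le n}T_k=\emptyset$. Each $T_k$ converges to $e\notin\overline{V_n}$, so $T_k\cap V_n$ is finite; and because $z_n\in\bar X\setminus X$ is a non-isolated point of $\bar X$ (in which $X$ is dense), $V_n\cap X$ is infinite. Applying Lemma~\ref{l1} to the sequential space $F$ at $z_n\in\bar X$ and passing to a branch of the resulting sequence tree, I obtain a sequence $(y_{n,l})_{l\in\w}\subset X\cap V_n$ converging to $z_n$, whose row indices satisfy $k_{n,l}>n$. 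A diagonal argument based on the countable $\cs$-network $\A$ at $e$ in $F$ then yields $l(n)$ such that $a_n:=y_{n,l(n)}\in X$ converges to $e$ in $F$; the points $a_n$ are pairwise distinct (the $V_n$'s being disjoint), and their row indices tend to infinity, so each row $T_k$ contains only finitely many $a_n$'s.

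Finally, from the rows $T_k$ together with this transversal $(a_n)$ and the nested symmetric neighborhoods $U_k$ (satisfying $U_kU_k\subset U_{k-1}$), I thin each row to a tail $T_k'\subset T_k\setminus\{a_n\}_{n\in\w}$ such that $Z:=\{e\}\cup\bigcup_k T_k'$ is sequentially closed in $F$ and carries exactly the Fr\'echet--Urysohn fan topology, yielding a closed copy of $S_\w$ inside $F$ and the desired contradiction. The principal obstacle is precisely this last step: one must show both that $Z$ has no accidental ``transversal'' sequence converging to $e$ (which would make the induced topology strictly coarser than the fan topology) and that $Z$ is sequentially closed in $F$. This verification relies essentially on the nesting properties of the $U_k$ and on the fact, built into the choice of the $x_{k,i}$, that no $A\in\mathcal A'$ is a sequential barrier at $e$ in $F$.
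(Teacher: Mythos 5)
Your overall strategy is genuinely different from the paper's (you aim to manufacture a closed copy of $S_\w$ and contradict $S_\w$-vacuousness, whereas the paper's proof of this claim is a pure convergence argument that does not use $S_\w$-vacuousness at all), but it has two unjustified steps that I do not see how to repair. First, the reduction to a two-level configuration: from $z_n\in\bar X\setminus X$ you claim to extract a sequence $(y_{n,l})_{l\in\w}\subset X$ converging to $z_n$ by ``applying Lemma~\ref{l1} and passing to a branch.'' But $F$ is only sequential, not Fr\'echet--Urysohn, so a point of $\bar X\setminus X$ need not be the limit of any sequence from $X$; the successors of $z_n$ in the sequence tree may again lie in $\bar X\setminus X$, and a (necessarily finite) branch gives you a sequence in $X$ converging to the \emph{penultimate node of that branch}, which is in general a point far below $z_n$ in the tree, not $z_n$ itself. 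The whole difficulty of this claim is that the tree above $e$ can have arbitrary finite depth, and the paper's proof is organized around handling the full tree rather than a depth-two picture.

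Second, even granting $y_{n,l}\to z_n$ and $z_n\to e$, the ``diagonal argument based on the countable $\cs$-network $\A$ at $e$'' producing $a_n=y_{n,l(n)}\to e$ is false as stated: the Arens fan $S_2$ is sequential, has countable $\cs$-character at $(0,0)$, and exhibits exactly this double-sequence configuration with no convergent diagonal. A $\cs$-network \emph{detects} convergent sequences; it cannot produce one. What makes the analogous step work in the paper is the lop structure: each level $\suc(t_k)$ of the tree is translated back to $e$ by $c_k^{-1}\cdots c_1^{-1}$, landing in $F_{k+1}$ where $\A$ is a $\cs$-network at $e$, and the telescoping inclusions $C_1\cdots C_{k+1}\subset U_1\cdots U_{k+1}\subset U$ force the terminal sequence $\suc(t_n)$ into some $B_{m_k}$, hence into the finite-row set $\{x_{j,i}:j\le k\}$, which clusters only at $e$ --- contradiction with $\lim\suc(t_n)=t_n\ne e$. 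Your proposal never invokes the multiplication or division of the lop, and without it the claim reduces to a statement about general sequential spaces that is not true. Finally, the last step (that your thinned set $Z$ is a closed copy of $S_\w$) is exactly what you acknowledge as the principal obstacle and is not carried out; as noted, the paper avoids needing it here and defers the $S_\w$-vacuousness argument to the stage after this claim.
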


\begin{proof} Assuming the converse and applying Lemma~\ref{l1}
we could find a sequence tree $T\subset \bar X$ such that $\min
T=\{e\}$, $\max T\subset X$, and $\mathrm{succ}(e)\subset \bar
X\setminus X$.

By induction, we will construct a (necessarily, finite) branch $(t_k)_{k\le n+1}$ of
the tree $T$, a sequence $\{C_k:k\le n\}$ of elements of the
family $\mathcal A$, and a sequence of points $(c_k)_{k=1}^{n+1}$ of $G$ such that for every $k\le n$ the following conditions are satisfied:
\begin{itemize}
\item[(d)] $c_k\in C_k\subset F_k\cap U_{k+1}$;
\item[(e)] $t_k=c_1\cdots c_k\in \suc(t_{k-1})$;
\item[(f)] $c_1\cdots c_kC_{k+1}$ contains almost all points of the sequence $\suc(t_k)$.
\end{itemize}

We start the inductive construction by letting $t_0=e$. Since the sequence $\suc(t_0)\subset F$ converges to $t_0$, we can find an element $C_0\in\A$ of the $\cs$-network such that $C_0\subset F\cap U_1$ and $\suc(t_0)\setminus C_0$ is finite. Assume that for some $k\ge 0$ the point $t_k=c_1\cdots c_k\in T\cap U$ has been chosen. If $t_k$ is a maximal point of the tree $T$, then we stop the construction.
So, we assume that $t_k$ is not maximal and hence $\suc(t_k)$ is an infinite sequence in $T\subset \bar X\subset F$ tending to $t_k$.

Since $c_k^{-1}\cdots c_1^{-1}(t_k)=e$, the sequence $c_k^{-1}\cdots c_1^{-1}(\suc(t_k))$ tends to $e$. We will show by induction  that for every $i\in\{1,\dots,k\}$ we get $c_i^{-1}\cdots c_1^{-1}(\suc(t_k))\subset F_{i+1}$.
For $i=1$ this follows from $c_1^{-1}(\suc(t_k))\subset C_1^{-1}F\subset F^{-1}F=F_2$. Assume that for some $i\le k$ we have proved that $c_{i-1}^{-1}\cdots c_1^{-1}(\suc(t_k))\subset F_i$. Then $$c_i^{-1}\cdots c_1^{-1}(\suc(t_k))=c_i^{-1}(c_{i-1}^{-1}\cdots c_1^{-1}(\suc(t_k)))\subset C_i^{-1}F_i\subset F_i^{-1}F_i=F_{i+1}.$$ Then for $i=k$ we get the desired inclusion
$c_k^{-1}\cdots c_1^{-1}(\suc(t_k))\subset F_{k+1}$.

Since $\A$ is a $\cs$-network at $e$ in $F_{k+1}\in\A$, there is a set $C_{k+1}\in\A$ which is contained in $F_{k+1}\cap U_{k+1}$ and contains all but finitely many points of the sequence $c_k^{-1}\cdots c_1^{-1}(\suc(t_k))$. Fix any point $t_{k+1}\in\suc(t_k)$ such that the point $c_{k+1}:=c_k^{-1}\cdots c_1^{-1}(t_{k+1})$ belongs to $C_{k+1}$. It follows that $t_{k+1}=c_1\cdots c_{k+1}$ and $\suc(t_{k})\setminus c_1\cdots c_kC_{k+1}$ is finite.
This completes the inductive step.

After completing the inductive construction, we obtain a branch $(t_k)_{k\le n+1}$ of the tree $T$ satisfying the conditions (d)--(f). Consider the (last by one) point $t_n$ in the branch $(t_k)_{k\le n+1}$. It follows that  $\suc(t_n)\subset\max T\subset X$. Since $\suc(e)\subset \bar X\setminus X$, the sequence $\suc(t_n)$ converges to the point $t_n\ne e$. By the inductive assumption, the set $\sigma:=\suc(t_n)\cap c_1\cdots c_nC_{n+1}$ contains all but finitely many points of the sequence $\suc(t_n)$. Consequently, $\sigma$ converges to $t_n$ as well.
On the other hand,
$\sigma\subset c_1\cdots c_nC_{n+1}\subset C_1C_2\cdots C_{n+1}\subset U_1U_2\cdots U_{n+1}\subset U_0\subset U$. It follows from the choice of $\A$ that $C_1\cdots C_{n+1}\in\A'$ and hence $(C_1\cdots C_{n+1})\cap F\subset B_{m_k}$ for some $k$. Consequently,
$\sigma\subset X\cap B_{m_k}$ and $\sigma\subset\{x_{j,i}:j\le
k,\;i\in\IN\}$ by the item (a) of the construction of $X$.
Since $e$ is a unique cluster point of the set $\{x_{j,i}:j\le
k,\; i\in\IN\}$, the sequence $\sigma$ cannot converge to
$t_n\ne e$, which is a contradiction completing the proof of Claim~\ref{cl2}.
\end{proof}

By Claim~\ref{cl2}, $e$ is an isolated point of $\bar X\setminus X$. Consequently, we can find a closed neighborhood $W$ of $e$ in $G$
such that the set $X_e=(\{e\}\cup X)\cap W$ is closed in $F$. Since the space $X$ is discrete,
the closed subspace $X_e$ has a unique non-isolated point $e$. Write the Fr\'echet-Urysohn fan
$S_\w$ as the union $S_\w=\{(0,0)\}\cup\bigcup_{n\in\IN}I_n$ of the convergent sequences $I_n=\{(\frac1i,\frac1{in}):i\in\IN\}$. Next, take any bijective map $h:S_\w\to X_e$ such that $h(0,0)=e$ and for every $n\in\IN$ the image $h(I_n)$ coincides with the sequence $J_n=X_e\cap\{x_{n,i}:i\in\IN\}$. By the choice of the topology on $S_\w$, the bijective map $h:S_\w\to X_e$ is continuous. Since the space $F$ is $S_\w$-vacuous, the map $h$ is not a homeomorphism, which allows us to find a closed set $D\subset S_\w$ whose image $h(D)$ is not closed in $X_e$.
By the sequentiality of $X_e$, there exists a sequence $S\subset \{s\}\cup h(D)$ converging to a point  $s\notin h(D)$. Since $h(D)$ is not closed in $X_e$ and $e$ is a unique non-isolated point of $X_e$, we get $e=s\notin h(D)$ and hence $(0,0)\notin D$.
Then the set $D$, being closed in $S_{\w}$ has finite intersection with each sequence $I_n=h^{-1}(J_n)$. Consequently the sequence $S\subset \{e\}\cup h(D)\subset \bigcup_{n\in\IN}J_n$ has finite intersection with each convergent sequence $J_n$, $n\in\IN$. Since $S$ is infinite it must meet infinitely many sequences $J_n$, $n\in\IN$.

On the other hand, the $\mathrm{cs}$-network
$\mathcal A'$ at $e$ contains a set $A_n$ containing almost all points of
the sequence $S$. Since $J_m\cap (J_k\cup A_n)=\emptyset$ for $m>
k\ge n$, the sequence $S$ cannot meet infinitely many sequences
$J_m$. This is a contradiction showing that
$\mathcal{A}|F$ is a $\mathrm{sb}$-network at $e$ in $F$.

2. Assume that the space $F$ is sequential and each space $F_n$, $n\in\IN$, has countable $sb$-network at $e$.

\begin{claim}\label{cl5.3} For each sequential barrier $U\subset F^{-1}F$ at $e$ the intersection $U\cap F$ is a neighborhood of $e$ in $F$.
\end{claim}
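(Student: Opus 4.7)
My plan is to argue by contradiction. Suppose $U\cap F$ is not a neighborhood of $e$ in $F$. Then $e\in\overline{F\setminus U}^F$, and the sequentiality of $F$ combined with Lemma~\ref{l1} yields a sequence tree $T\subset F$ with $\min T=\{e\}$ and $\max T\subset F\setminus U$. Since $F\subset F_2$ (via $f=e^{-1}f$), a contradiction with $U$ being a sequential barrier at $e$ in $F_2$ would follow from a sequence in $F\setminus U$ converging to $e$ in $F_2$ (equivalently, in $G$).

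Enumerate the countable $sb$-networks $\mathcal{B}_n=\{B_{n,k}\}_{k\in\w}$ of $F_n$ at $e$ and, passing to the family of finite intersections, assume $B_{n,k+1}\subset B_{n,k}$. Following the inductive construction of Claim~\ref{cl2} of part~1 but with sequential barriers in place of $\cs$-network elements, I would build, for any prescribed strictly increasing index function $\phi:\IN\to\IN$, a branch $e=t_0,t_1,\dots,t_n$ of $T$ together with a decomposition $t_k=c_1c_2\cdots c_k$ satisfying $c_k\in B_{k,\phi(k)}\cap F_k$. The induction step mirrors part~1: once $t_k$ and $c_1,\dots,c_k$ are chosen, the shifted sequence $c_k^{-1}\cdots c_1^{-1}(\suc(t_k))$ lies in $F_{k+1}$ and converges to $e$ there, so the sequential barrier $B_{k+1,\phi(k+1)}$ catches almost all of its terms, allowing one to choose $t_{k+1}\in\suc(t_k)$ accordingly. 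Since $T$ has no infinite branches, the procedure terminates at a leaf $t_n\in\max T\subset F\setminus U$. Iterating this construction over $m\in\w$ with functions $\phi_m$ satisfying $\phi_m(k)\to\infty$ as $m\to\infty$ produces a family of leaves $s_m=c_1^{(m)}\cdots c_{n_m}^{(m)}\in F\setminus U\subset F_2\setminus U$.

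The contradiction would follow once one shows that some subsequence of $(s_m)_{m\in\w}$ converges to $e$ in $F_2$, since such a subsequence lies entirely in $F_2\setminus U$. The main technical difficulty is precisely this convergence: each factor $c_k^{(m)}$ lies in a sequential barrier shrinking with $m$, yet the depth $n_m$ need not be bounded and the multiplication in the lop is non-associative. I expect the correct tactic is to use the $sb$-network $\mathcal{B}_2$ as a family of test sets in $F_2$: for any prescribed $B\in\mathcal{B}_2$ one arranges through the initial inductive step that $c_1^{(m)}\in B$, and then recursively controls the remainder $(c_1^{(m)})^{-1}s_m\in F_2$ by reapplying the same scheme to the subtree rooted at $t_1^{(m)}$. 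Propagating this control uniformly across branches of variable depth using only sequential-barrier data, rather than the decreasing chain of open neighborhoods $U_k$ with $U_kU_k\subset U_{k-1}$ that simplified part~1, is the step where I expect the bulk of the technical work to lie.
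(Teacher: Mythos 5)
There is a genuine gap, and it lies exactly where you place it: the convergence of the leaves $s_m$ to $e$. In fact that step is not just technically hard --- it is the wrong target. The whole reason Lemma~\ref{l1} produces a sequence \emph{tree} rather than a convergent sequence is that in a sequential non-Fr\'echet-Urysohn space one cannot in general extract from $F\setminus U$ a sequence converging to $e$, even though $e$ lies in its closure; if you could arrange a subsequence of leaves converging to $e$, the sequential-barrier hypothesis would be violated immediately and the tree apparatus would be superfluous. The intended contradiction is of a different nature: one shows that the branch being constructed can \emph{never reach a leaf}, hence is infinite, contradicting the defining property of a sequence tree.

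The missing idea that makes this work is a barrier analogue of the telescoping neighborhoods $U_kU_k\subset U_{k-1}$ from part~1, which you explicitly (and incorrectly) rule out. Starting from $S_1=U\cap F$, one builds for each $n$ a sequential barrier $S_n$ at $e$ in $F_n$ with $S_nS_n\cap F_{n-1}\subset S_{n-1}$: taking a decreasing $sb$-network $\{B_m\}_{m\in\w}$ of $F_n$ at $e$, if no $B_m$ worked one could pick $x_m,y_m\in B_m$ with $x_my_m\in F_{n-1}\setminus S_{n-1}$; but $x_m\to e$ and $y_m\to e$ force $x_my_m\to e$, and $S_{n-1}$ is a sequential barrier, a contradiction. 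With these $S_n$ in hand, the branch is grown by choosing $s_{n+1}=s_n^{-1}\cdots s_1^{-1}(t_{n+1})\in S_{n+2}$, and a downward induction using $S_{k+1}S_{k+1}\cap F_k\subset S_k$ shows $t_{n+1}=s_1\cdots s_{n+1}\in S_1\subset U$. Since $\max T\subset F\setminus U$, the node $t_{n+1}$ is never maximal, so the construction continues forever, producing an infinite branch of $T$ --- the desired contradiction. Your proposal contains neither the multiplicative chain of barriers nor this change of contradiction, so as it stands it does not close.
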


\begin{proof} Assuming that for some sequential barrier $U\subset F^{-1}F$ the set $U\cap F$ is not a neighborhood of $e$ at $F$, we can apply Lemma \ref{l1} and find a sequence tree $T\subset F $ such that $\min T=\{e\}$ and $\max T\subset F\setminus U$. To get a
contradiction it suffices to construct an infinite branch of $ T $.

Let $S_1=U\cap F$. By induction  we will construct for every $n>1$ a sequential barrier $S_n$ at $e$ in the space $F_n$ such that $S_nS_n\cap F_{n-1}\subset S_{n-1}$. Assume that for some $n> 0$ a sequential barrier $S_{n-1}\subset F_{n-1}$ has been constructed.

By our assumption, the space $F_n$ has countable $\sbr$-character at $e$ and hence admits a decreasing $\mathrm{sb}$-network $\{B_{m}\}_{m\in\w}$ at $e$. We claim that $B_mB_m\cap F_{n-1}\subset S_{n-1}$ for some $m\in\w$. In the opposite case for every $ m\in\w$ we can find points $ x_m
,y_m \in B_m$ such that $x_m y_m\in F_{n-1}\setminus S_{n-1}$. Taking
into account that $\lim_{m\rightarrow \infty } x_m =
\lim_{m\rightarrow\infty } y_m =e $, we get
$\lim_{m\rightarrow\infty}x_m y_m =e $. Since $S_{n-1}$ is a
sequential barrier at $e$, there is a number $m$ with $x_my_m\in
S_{n-1}$, which contradicts the choice of the points $x_m,y_m$.
This contradiction shows that $B_mB_m\cap F_{n-1}\subset S_{n-1}$ for some $m$ and we can put $S_{n}=B_m$.

Now we are ready to construct an infinite branch $(t_i)_{i\in\w}$ of the tree $T$. Put $t_0=e\in T$. By induction we will construct sequences of points $(t_n)_{n\in\w}$ and $(s_n)_{n\in\IN}$ in $G$ such that the following conditions are satisfied for every $n\in\IN$:
\begin{itemize}
\item[(g)] $t_n\in\suc(t_{n-1})$;
\item[(h)] $s_n\in S_{n+1}\cap F_n$;
\item[(i)] $t_n=s_1\cdots s_n\in U\cap T$.
\end{itemize}
Assume that for some $n>0$ the point $t_n=s_1\cdots s_n\in U\cap T$ has been constructed.
Since $t_n\in U\cap T$ and $\max(T)\subset F\setminus U$, the point $t_n$ is not maximal in $T$ and hence the convergent sequence $\suc(t_n)\subset T$ is well-defined. Since $s^{-1}_n\cdots s_1^{-1}(t_n)=e$, the sequence  $s^{-1}_n\cdots s_1^{-1}(\suc(t_n))$ converges to $e$. By induction it can be shown that this sequence is contained in the set $F_{n+1}$. Since $S_{n+2}\cap F_{n+1}$ is a sequential barrier at $e$ in $F_{n+1}$, there is a point $t_{n+1}\in\suc(t_n)$ such that the point $s_{n+1}:=s^{-1}_n\cdots s_1^{-1}(t_{n+1})$ belongs to the barrier $S_{n+2}$. Multiplying this equality by $s_{n},\dots,s_1$ from the left, we get the equality $s_{1}\cdots s_{n+1}=t_{n+1}$. It remains to show that $t_{n+1}\in U$.

By induction on $k\le n+1$ we will prove that $s_k\cdots s_{n+1}\subset F_k$. For $k=1$ this is true as $t_{n+1}=s_1\cdots s_{n+1}\in F=F_1$. Assume that the inclusion  $s_k\cdots s_{n+1}\in F_k$ has been proved for some $k<n+1$. Then $s_{k+1}\cdots s_{n+1}=s_k^{-1}(s_k\cdots s_{n+1})\in F_k^{-1}F_k=F_{k+1}$.

Next, we will show by induction on $k\in\{n+1,\dots,1\}$  that $s_k\cdots s_{n+1}\in S_{k}$. For $k=n+1$ this follows by the choice of the point $s_{n+1}\in S_{n+2}\cap F_{n+1}\subset S_{n+1}$. Assume that for some $k\le n+1$ we have proved that $s_{k+1}\cdots s_{n+1}\in S_{k+1}$. The choice of $s_k\in S_{k+1}$ and the inductive assumption guarantee that $s_k(s_{k+1}\cdots s_{n+1})\in F_k\cap (S_{k+1}S_{k+1})\subset S_k$. So, $t_{n+1}=s_1\cdots s_{n+1}\in S_1\subset U$.

Therefore we have constructed an infinite branch
$\{t_i:i\in\omega\}$ of the sequence tree $T$ which is impossible.
\end{proof}

Fix any countable $sb$-network $\mathcal B$ at $e$ in the space $F^{-1}F$. Claim~\ref{cl5.3} implies that $\{B\cap F:B\in\mathcal B\}$ is a countable neighborhood base at $e$ in the space $F$.

3. Assume that for every $n\in\IN$ the space $F_n$ is an $S_\w$-vacuous sequential space with countable $\cs^*$-network at $e$. For every $n\in\IN$ put $E=F_n$, $E_1=E$ and $E_{m+1}=E_m^{-1}E_m$ for $m\ge 1$.
 It can be shown by induction that $E_m=F_{n+m-1}$ for every $m\in\IN$. So, the spaces $E_m$, $m\in\IN$, have countable $\cs^*$-network at $e$. By Lemma~\ref{l:cs=cs*}, these spaces has countable $\cs$-network at $e$.
Applying Lemma~\ref{l:key}(1) to the $S_\w$-vacuous space $E=F_n$ and the sequence $(E_m)_{m\in\IN}$, we conclude that $F_n=E$ has countable $sb$-network at $e$. By Lemma~\ref{l:key}(2), the space $F$ is first countable at $e$.
\end{proof}

\section{Metrizability criteria for subsets of rectifiable spaces}\label{s:metr}

In this section we will derive some corollaries from  Key Lemma~\ref{l:key}, which can be interesting on their own. The first of them generalizes several metrizability criteria proved in \cite[\S4]{Lin13} and \cite[\S4]{LS}.

\begin{theorem}\label{t:mc} A rectifiable space $X$ is metrizable if and only if it is sequential, has countable $\cs^*$-character, and contains no closed copy of the Fr\'echet-Urysohn fan $S_\w$.
\end{theorem}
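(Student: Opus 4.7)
The plan is to reduce the theorem to an application of Key Lemma~\ref{l:key}(3). The nontrivial direction is the ``if'' part; the ``only if'' direction follows by observing that any metrizable space is first countable (hence has countable $\cs^*$-character), sequential, and cannot contain a closed copy of $S_\w$ (since $S_\w$ is not first countable at its non-isolated point, while closed subspaces of metrizable spaces are metrizable).

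For the ``if'' direction, let $X$ be a sequential rectifiable space of countable $\cs^*$-character containing no closed copy of $S_\w$. By Theorem~\ref{t:rectilop}, we may assume without loss of generality that $X$ is a topological lop with unit $e$. I would then set $F=X$ in Key Lemma~\ref{l:key}(3). The point is that the derived sets $F_n$ collapse: since the left shifts of the lop $X$ are bijections of $X$ onto itself, the set $F^{-1}F=X^{-1}X$ equals $X$ (the inclusion $X\subset X^{-1}X$ is witnessed by $x=e^{-1}x$, and the reverse inclusion follows from $L_a^{-1}(b)\in X$), so by induction $F_n=X$ for every $n\in\IN$. Hence each $F_n$ is sequential, $S_\w$-vacuous, and has countable $\cs^*$-character at $e$, which are precisely the hypotheses of item (3) of the Key Lemma.

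Invoking Key Lemma~\ref{l:key}(3) yields that $X=F$ is first countable at $e$. Since $X$ is rectifiable, it is topologically homogeneous (as recalled in the introduction), hence first countable at every point. Then the classical result of Gul'ko \cite{Gul} (quoted in the introduction) that every first countable rectifiable $T_0$-space is metrizable delivers metrizability of $X$, completing the proof.

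The main obstacle is conceptually packaged inside the Key Lemma itself; once that lemma is in hand, the present theorem is essentially a direct specialization. The only small point requiring care is to verify the chain $F_n=X$ so that the assumptions of item (3) apply uniformly to all $F_n$, and to check that the ``only if'' direction genuinely rules out closed copies of $S_\w$, which it does because $S_\w$ fails to be first countable and first countability is hereditary to closed (in fact all) subspaces of a metric space.
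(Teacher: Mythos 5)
Your proof is correct and follows essentially the same route as the paper: reduce to a topological lop via Theorem~\ref{t:rectilop}, apply Key Lemma~\ref{l:key}(3) with $F=X$ (where indeed $F_n=X$ for all $n$, as you verify), and conclude metrizability from first countability at $e$ via homogeneity and Gul'ko's theorem. The extra details you supply (the collapse $X^{-1}X=X$ and the justification of the ``only if'' direction) are exactly the points the paper leaves implicit.
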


\begin{proof} The ``only if'' part is trivial. To prove that ``if'' part, assume that $X$ is a rectifiable sequential $S_\w$-vacuous space of countable $\cs^*$-character. By Theorem~\ref{t:rectilop}, we can assume that $X$ is a topological lop.
By Lemma~\ref{l:key}(3), the topological lop $X$ is first countable at $e$. By \cite{Gul}, the rectifiable space $X$, being first countable, is metrizable.
\end{proof}

Another corollary of Lemma~\ref{l:key} concerns metrizability of countably or sequentially compact subsets in rectifiable spaces of countable $\cs^*$-character.

\begin{theorem}\label{t:ccm} Let $X$ be a sequential rectifiable space of countable $\cs^*$-character. Then each countably compact subset of $X$ is metrizable.
\end{theorem}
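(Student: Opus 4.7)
The plan is to realize $X$ as a topological lop via Theorem~\ref{t:rectilop} and then apply Key Lemma~\ref{l:key}(3) to the ``algebraic'' set $K^{-1}K$ in order to produce a $G_\delta$-diagonal for $K$; combined with countable compactness this forces $K$ to be metrizable through Chaber's theorem (every countably compact Hausdorff space with $G_\delta$-diagonal is compact) and the classical fact that a compact Hausdorff space with $G_\delta$-diagonal is metrizable.

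First I verify that $K$ is closed in $X$ and sequential. Any convergent sequence $(x_n)\subset K$ has its limit in $K$: by countable compactness $(x_n)$ has an accumulation point in $K$, and in the Hausdorff space $X$ the only accumulation point of a convergent sequence is its limit. Hence $K$ is sequentially closed, and sequentiality of $X$ upgrades this to closedness; as a closed subspace of a sequential space, $K$ is sequential, so countable compactness of $K$ promotes to sequential compactness. Assuming $K\neq\emptyset$, fix $a\in K$, so that $e=a^{-1}a\in K^{-1}K$ and the algebraic set $K^{-1}K$ contains the unit of the lop.

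The core of the argument is an induction showing that the tower $F_1=K^{-1}K$, $F_{n+1}=F_n^{-1}F_n$ consists of countably compact subsets of $X$. The map $(x,y)\mapsto x^{-1}y$ is continuous on $X\times X$, being the second coordinate of the inverse of the defining lop homeomorphism, and the product of two sequentially compact spaces is sequentially compact; hence $F_{n+1}$ is the continuous image of the sequentially compact set $F_n\times F_n$ and therefore countably compact in $X$. Rerunning the structural argument of the previous paragraph shows each $F_n$ is closed in $X$, sequential, and sequentially compact. An infinite closed discrete subset of $S_\w$ (take one point from each spine, away from the vertex) rules out closed copies of $S_\w$ in any countably compact subspace, so each $F_n$ is $S_\w$-vacuous; countable $\cs^*$-character at $e$ in $F_n$ is inherited by intersecting a countable $\cs^*$-network at $e$ in $X$ with $F_n$.

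With the hypotheses of Key Lemma~\ref{l:key}(3) now verified for $F=K^{-1}K$, the conclusion is that $K^{-1}K$ is first countable at $e$; regularity then makes $\{e\}$ a $G_\delta$-set in $K^{-1}K$. The continuous map $\varphi\colon K\times K\to K^{-1}K$, $\varphi(x,y)=x^{-1}y$, satisfies $\varphi^{-1}(e)=\Delta_K$, so pulling back a countable family of open neighborhoods of $e$ with intersection $\{e\}$ exhibits $\Delta_K$ as a $G_\delta$-set in $K\times K$. Chaber's theorem then makes the countably compact space $K$ compact, after which the classical metrization criterion finishes the proof. The main obstacle is the countable-compactness induction for the tower $(F_n)_{n\in\IN}$: without it one cannot feed the non-lop subset $K^{-1}K$ into Key Lemma~\ref{l:key}(3), whose hypotheses concern the entire tower.
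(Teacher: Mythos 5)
Your proposal is correct and follows essentially the same route as the paper: pass to a topological lop via Theorem~\ref{t:rectilop}, verify that the tower $F_{n+1}=F_n^{-1}F_n$ consists of sequentially compact (hence $S_\w$-vacuous) sequential subspaces of countable $\cs^*$-character at $e$, apply Key Lemma~\ref{l:key}(3) to get first countability at $e$ of $K^{-1}K$, pull back to a $G_\delta$-diagonal on $K\times K$ via $(x,y)\mapsto x^{-1}y$, and finish with Chaber's theorem. Your choice to start the tower at $K^{-1}K$ (so that it visibly contains $e$) and your explicit justifications of $S_\w$-vacuousness and of the closedness/sequentiality of $K$ are minor refinements of steps the paper leaves implicit.
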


\begin{proof} By Theorem~\ref{t:rectilop}, we can assume that $X$ is a topological lop. Fix any countably compact subset $F\subset X$. Since the space $X$ is sequential, $F$ is sequentially compact. Put $F_1=F$ and $F_{n+1}=F_n^{-1}F_n\subset X$ for $n\ge 1$. Since the sequential compactness is preserved under finite products and continuous images, the spaces $F_n$ are sequentially compact for all $n\in\IN$. For every $n\in\IN$ the sequential compactness of $F_n$ and the sequentiality of $X$ imply the sequentiality of the space $F_n$. Then  Lemma~\ref{l:key} implies that all spaces $F_n$ are first countable at $e$. In particular, the space $F_2=F^{-1}F$ is first countable at $e$. The continuity of the division $q:F\times F\to F^{-1}F$, $q:(x,y)\mapsto x^{-1}y$, implies that the square $F\times F$ has $G_\delta$-diagonal $\Delta_F=\{(x,t)\in F\times F:x=y\}=q^{-1}(e)$. By Chaber's Theorem \cite[2.14]{Gru}, the countably compact space $F$  is metrizable.
\end{proof}

The sequentiality of the rectifiable space $X$ in the preceding corollary can be replaced by the sequentiality and compactness of the space $F$.

\begin{theorem}\label{t:csm}  Let $X$ be a rectifiable space of countable $\cs^*$-character. Then each compact sequential subspace of $X$ is metrizable.
\end{theorem}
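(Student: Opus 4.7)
The plan is to adapt the proof of Theorem~\ref{t:ccm} to the present setting, where the hypothesis that the ambient space $X$ is sequential is replaced by the strictly stronger hypothesis that the subspace $F$ is compact (not merely countably compact).

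By Theorem~\ref{t:rectilop}, I may assume that $X$ is a topological lop with unit $e$, and after replacing $F$ by $F\cup\{e\}$ I assume $e\in F$. Put $F_1=F$ and $F_{n+1}=F_n^{-1}F_n\subset X$ for $n\in\IN$. Since continuous images of compact spaces are compact, every $F_n$ is compact. Being compact and sequential, $F$ is sequentially compact; since sequential compactness is preserved under finite products and continuous images, each $F_n$ is sequentially compact. Moreover, each $F_n$ inherits countable $\cs^*$-character at $e$ from $X$, and, being compact, each $F_n$ is $S_\w$-vacuous (as $S_\w$ is not compact).

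The critical step is to show that every space $F_n$ is sequential. I would argue this by induction on $n$. The case $n=1$ is the hypothesis. For the inductive step, the division map $q_n:F_n\times F_n\to F_{n+1}$, $(x,y)\mapsto x^{-1}y$, is a continuous surjection from the compact space $F_n\times F_n$ onto the Hausdorff space $F_{n+1}$, hence a closed quotient map. Since quotients of sequential spaces are sequential, it suffices to verify that $F_n\times F_n$ is sequential. Here is where the compactness of $F$ (in place of the sequentiality of $X$) is used: $F_n\times F_n$ is compact Hausdorff, sequentially compact, and has countable $\cs^*$-character at $(e,e)$ (inherited from $X\times X$). I would extract the required sequentiality via a sequence-tree argument as in Lemma~\ref{l1}, along the lines of Key Lemma~\ref{l:key}, exploiting that in the product the inductive sequentiality of $F_n$ together with countable $\cs^*$-character propagate to the diagonally-constructed sequences.

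Granted the sequentiality of every $F_n$, the spaces $F_n$ are $S_\w$-vacuous sequential spaces of countable $\cs^*$-character at $e$, so Key Lemma~\ref{l:key}(3) applied to $F_2$ and its iterate sequence $((F_2)_m)_{m\in\IN}=(F_{m+1})_{m\in\IN}$ yields that $F_2=F^{-1}F$ is first countable at $e$. Consequently $\{e\}$ is a $G_\delta$-subset of the compact space $F^{-1}F$. Since the division $q:F\times F\to F^{-1}F$, $(x,y)\mapsto x^{-1}y$, is continuous, the diagonal $\Delta_F=q^{-1}(e)$ is a $G_\delta$-subset of $F\times F$. By Chaber's Theorem \cite[2.14]{Gru}, the compact space $F$ with $G_\delta$-diagonal is metrizable.

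The main obstacle is the inductive sequentiality step for $F_{n+1}$: without the sequentiality of the ambient $X$ that was available in Theorem~\ref{t:ccm}, one has to extract sequentiality of $F_n\times F_n$ (equivalently, of $F_{n+1}$) from the compactness of $F$, the sequentiality of $F_n$, and the countable $\cs^*$-character at $e$. All other steps — preservation of compactness and sequential compactness under the iteration, invocation of Key Lemma~\ref{l:key}(3), the $G_\delta$-diagonal argument via continuity of division, and the appeal to Chaber's Theorem — are direct transcriptions of the corresponding steps in the proof of Theorem~\ref{t:ccm}.
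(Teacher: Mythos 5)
Your outline reproduces the paper's argument step for step, but the one step you flag as ``the main obstacle'' --- the sequentiality of $F_n\times F_n$, hence of its quotient $F_{n+1}$ --- is exactly the step you do not actually prove, so as written the proposal has a genuine gap. Your suggested route (a sequence-tree argument in the spirit of Lemma~\ref{l1} and Key Lemma~\ref{l:key}, using countable $\cs^*$-character at $(e,e)$) is not what is needed and would be hard to make work: countable $\cs^*$-character at the single point $(e,e)$ gives you no control over closures of arbitrary subsets of $F_n\times F_n$ at arbitrary points, which is what sequentiality requires. The missing ingredient is a classical product theorem that uses no $\cs^*$-character at all: the Cartesian product of a sequential space with a sequential space all of whose closed subsets are sequentially compact is again sequential (Boehme-type theorem; \cite[3.10.I(b)]{En}). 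Since $F_n$ is compact and sequential, every closed subset of $F_n$ is sequentially compact, so $F_n\times F_n$ is a compact sequential space, and the continuous surjection $q_n:F_n\times F_n\to F_{n+1}$ from a compact space onto a Hausdorff space is closed, hence quotient, so $F_{n+1}$ is sequential. This is precisely how the paper closes the induction.

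Everything else in your proposal is correct and coincides with the paper's proof: the reduction to a topological lop, the preservation of compactness and sequential compactness along the iteration $F_{n+1}=F_n^{-1}F_n$, the observation that compactness forces $S_\w$-vacuity, the application of Key Lemma~\ref{l:key}(3) to conclude that $F_2=F^{-1}F$ is first countable at $e$, and the passage to a $G_\delta$-diagonal of $F$ via continuity of division followed by the metrization theorem for compact spaces with $G_\delta$-diagonal. Once you replace the vague sequence-tree plan by the citation above, the proof is complete.
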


\begin{proof} Let $F\subset X$ be a compact sequential space. Put $F=F_1$ and $F_{n+1}=F_n^{-1}F_n$ for $n\ge 1$. By induction we will prove that every space $F_n$ is compact and sequential.
For $n=1$ this follows from the sequentiality and compactness of $F$.
Assume that for some $n\in\IN$ we have proved that the space $F_n$ is compact and sequential. Then each closed subset of $F_n$ is sequentially compact and hence the square $F_n\times F_n$ is a compact sequential space (see \cite[3.10.I(b)]{En}). Taking into account that the space $F_n\times F_n$ is compact and the map $q:F_n\times F_n\to F_n^{-1}F_n$, $q:(x,y)\mapsto x^{-1}y$ is continuous, we conclude that the image $F_{n+1}=F_n^{-1}F_n=q(F_n\times F_n)$ is compact and the map $q$ is closed and thus quotient. Since the sequentiality is preserved by quotient maps (\cite[2.4.G]{En}), the space $F_{n+1}$ is sequential. Therefore all spaces $F_n$, $n\in\w$, are sequential and, being compact, are sequentially compact and $S_\w$-vacuous. Now it is possible to apply Key Lemma~\ref{l:key} to conclude that the space $F_2=F^{-1}F$ is first countable at $e$, which implies that the compact space $F$ has $G_\delta$-diagonal and hence is metrizable by \cite[2.13]{Gru}.
\end{proof}

\section{The structure of non-metrizable topological lops of countable $\cs^*$-character}\label{s:nonmetr}

In this section we establish an algebraic analogue of Theorem~\ref{main}. More precisely, to deduce Theorem~\ref{main} we need to combine Theorems~\ref{t:rectilop} and \ref{t:sum} with the following theorem. For a topological lop $X$ an {\em $\sk_\w$-sublop} is a sublop of $X$ which is an $\sk_\w$-space.

\begin{theorem}\label{t:nonmetr} Each non-metrizable sequential  topological lop $X$ of countable $\cs^*$-character contains a clopen $\sk_\w$-sublop $U$.
\end{theorem}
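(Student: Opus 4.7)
The plan is to combine the fan dichotomy forced by Theorem \ref{t:mc} and Corollary \ref{c:fans} with an iterated-closure construction of a sublop. Since $X$ is non-metrizable, sequential, and of countable $\cs^*$-character, Theorem \ref{t:mc} forces $X$ to contain a closed topological copy of $S_\w$, while Corollary \ref{c:fans} forbids a simultaneous closed copy of $M_\w$. Using the topological homogeneity guaranteed by Theorem \ref{t:rectilop}, I would fix a closed embedding $\psi\colon S_\w\to X$ with $\psi(0,0)=e$ and write $I^\psi_m:=\psi(I_m)$ for its spokes; the finite unions $\psi(\{(0,0)\}\cup I_1\cup\cdots\cup I_k)$ give an exhaustion of $\psi(S_\w)$ by compact metrizable subsets.

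Next I would let $H$ be the sublop of $X$ algebraically generated by $\psi(S_\w)$ and present it as a $\sigma$-compact union $H=\bigcup_n K_n$ of compact metrizable pieces. Starting from $L_0:=\psi(S_\w)$, iteratively define $L_{n+1}:=L_n\cup L_nL_n\cup L_n^{-1}L_n$; each $L_n$ is a countable union of continuous images of compact metrizable products, hence a countable union of compact subsets of $X$. Since these compact pieces are sequential as subspaces of the sequential space $X$, Theorem \ref{t:csm} shows each of them is metrizable. Re-indexing into an increasing chain $K_0\subset K_1\subset\cdots$ with $H=\bigcup_n K_n$ produces the desired sublop equipped with a cover by compact metrizable sets.

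The remaining core assertions are (i) $(K_n)$ is a $k_\w$-cover of $H$, equivalently every convergent sequence in $H$ lies eventually in some $K_n$, and (ii) $H$ is open in $X$. Once (i) and (ii) are verified, $H$ is an $\sk_\w$-space by construction and Proposition \ref{p:sublop} promotes openness to clopen. I expect (i) and (ii) to be the main obstacle, and I would tackle both by contradiction via the same mechanism: in each case one extracts a convergent sequence $(z_i)\to e$ in $X$ which either escapes the filtration $(K_n)$ (for (i), after left-translating the limit to $e$) or lies outside $H$ (for (ii)); combining such a sequence with the internal spokes $I^\psi_m$ through the lop multiplication, and then using a careful subsequence selection guided by the countable $\cs^*$-network at $e$ in the spirit of the sequence-tree arguments of Key Lemma \ref{l:key}, one aims to produce a closed topological copy of the metric fan $M_\w$ inside $X$. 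Such a copy directly contradicts Corollary \ref{c:fans}, and the technical heart of the proof lies in executing this fan construction.
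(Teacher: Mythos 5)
There is a genuine gap, and it lies exactly where you flag the ``technical heart'': your choice of the sublop is wrong, and the two core assertions (i) and (ii) cannot be rescued for it. The sublop $H$ generated by a single closed copy $\psi(S_\w)$ has no reason to be open, nor to be a sequential barrier at $e$: nothing forces an arbitrary sequence converging to $e$ to be eventually inside $H$, and a sequence escaping $H$ is just a compact convergent sequence --- it carries no closed discrete structure from which to manufacture a closed copy of $M_\w$, so $M_\w$-vacuity gives no contradiction. (Concretely, $\psi(S_\w)$ could sit inside a proper closed non-open sublop, and then so does everything it generates.) The same problem kills (i): a convergent sequence in $H$ tending to $e$ need not be eventually contained in any of your $K_n$, because the $K_n$ are built only from $\psi(S_\w)$ and say nothing about which sets absorb convergent sequences at $e$.

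The missing idea is that the clopen sublop must be assembled from a countable $\cs$-network at $e$, not from one copy of $S_\w$. The paper fixes a countable $\cs^*$-network $\N$ at $e$ consisting of closed sets, and the decisive step (Claim~\ref{cl7.2}) is to show that the subfamily $\K\subset\N$ of \emph{countably compact} members is still a $\cs$-network at $e$; this is where $M_\w$-vacuity is actually used, via an inductive construction of closed discrete sets $D_n$ inside shrinking members of $\N$ whose union with $\{e\}$ would be a closed copy of $M_\w$. By Theorem~\ref{t:ccm} the members of $\K$ are compact metrizable, and after closing $\K$ under unions, intersections, products and division, the set $U=\bigcup\K$ is a $\sigma$-compact sublop which is automatically a sequential barrier at each of its points (translate a convergent sequence to $e$ and apply the $\cs$-network property), hence open by sequentiality, hence clopen by Proposition~\ref{p:sublop}; the $k_\w$ property of $U$ with respect to $\K$ follows by the same translation trick. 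So your preliminary dichotomy (closed $S_\w$ present, closed $M_\w$ absent) and the use of Theorems~\ref{t:mc}, \ref{t:ccm}/\ref{t:csm} are on target, but the construction of the sublop and the intended use of $M_\w$-vacuity both need to be replaced.
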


\begin{proof} Since $X$ is not metrizable, we can apply Theorem \ref{t:mc} and conclude that the sequential space $X$ contains a closed topological copy of the Fr\'echet-Urysohn fan $S_\w$. Then by  Corollary~\ref{c:fans}, the space $X$ contains no closed
copy of the metric fan $M_\w$.


Fix a countable $\mathrm{cs}^*$-network $\mathcal{N} $ at $ e $, closed under finite unions, finite
intersections, and consisting of closed subspaces of $ X $. By (the proof of) Lemma~\ref{l:cs=cs*}, the family $\mathcal N$ is a $\cs$-network at $e$.
Consider the collection $\K\subset\mathcal N$ of all
countably compact subspaces $N\in\mathcal N$.

\begin{claim}\label{cl7.2} The family $\K$ is a $\mathrm{cs}$-network at $e$.
\end{claim}

\begin{proof} Given any neighborhood $U\subset X$ of $e$ and sequence $\{x_n\}_{n\in\w}\subset X$ converging to $e$, we should find a
countably compact set $K\in\mathcal N$ with $K\subset U$,
containing all but finitely many points the sequence. Let
$\mathcal{A}=\{A_k:k\in\omega\}$ be the collection of all elements
$N\subset U$ of $\mathcal N$ containing almost all points $x_n$.
Now it suffices to find a number $ n\in\omega $ such that the
intersection $K=\bigcap_{k\le n}A_k$ is countably compact. Suppose
to the contrary, that for every $ n\in\omega$ the set
$\bigcap_{k\le n}A_k$ is not countably compact. Then there exists
a countable closed discrete subspace $ D_{0}\subset A_{0}$ with
$D_0\not\ni e$. Fix a neighborhood $ W_{0}$ of $e$ such that $
W_{0}\cap D_{0}=\emptyset $. Since $\mathcal N$ is a
$\mathrm{cs}$-network at $e$, there exists $ k_{1}\in\omega $ such
that $ A_{k_{1}}\subset W_{0}$.

It follows from our hypothesis that there is a countable closed
discrete subspace $D_{1}\subset\bigcap_{k\le k_1}A_k$ with $D_1\not\ni
e$. Proceeding in this fashion we construct by induction an
increasing number sequence $ (k_{n})_{n\in\omega}\subset\omega $,
a sequence $(D_{n})_{n\in\omega}$ of countable closed discrete
subspaces of $G$, and a sequence $ (W_{n})_{n\in\omega}$ of open
neighborhoods of $ e $ such that $D_{n}\subset\bigcap_{k\le
k_n}A_{k}$, $W_{n}\cap D_{n}=\emptyset$, and $A_{k_{n+1}}\subset
W_{n}$ for all $ n\in\omega $.

It follows from the above construction that $M=
\{e\}\cup\bigcup_{n\in\omega}D_n$ is a closed topological copy of the metric fan $M_\w$ which is impossible.
\end{proof}

So, the family $\K$ is a $\cs$-network at $e$.
By Theorem~\ref{t:ccm}, each countably compact subspace of $X$ is metrizable (and hence compact).
Therefore, $\K$ is a $\cs$-network at $e$ consisting of compact metrizable subsets. Replacing $\K$ by a larger family, we can assume that $\K$ is closed under finite unions, finite intersections, and for any sets $A,B\in \K$ the (compact metrizable) sets $AB$ and $A^{-1}B$ belong to $\K$.
Then the union $U=\bigcup\K$ is a $\sigma$-compact sublop of the lop $X$.

Let us show that for each point $u\in U$ the set $U$ is a sequential barrier at $u$. Take any sequence $(x_n)_{n\in\w}$ in $X$, converging to the point $u$ in $X$. Since the left shift by $u$ is a homeomorphism of $X$ sending $e$ to $u$, the sequence $(u^{-1}x_n)_{n\in\w}$ converges to the unit $e$ of $X$. The family $\K$, being a $\cs$-network at $e$, contains a set $K\in\K$ containing all but finitely many points of the sequence $(u^{-1}x_n)_{n\in\w}$. Then the set $uK\subset U$ contains all but finitely many points of the sequence $(x_n)_{n\in\w}$, which means that $U$ is a sequential barrier at $u$. Since $X$ is sequential, the set $U$ is open in $X$. By Proposition~\ref{p:sublop}, the sublop $U$ is closed in $X$.

We claim that $U$ is an $sk_\w$-space (whose topology is generated by the cover $\mathcal{K} $). Indeed, consider any subset $F\subset U$
such that  for each compact set $ K\in\mathcal{K}$ the intersection $F\cap K $ is closed in $ K $. We need to check that $F$ is relatively closed in $U$. Assuming that $F$ is not closed in $U$, we conclude that $F\cup(X\setminus U)$ is not closed in $X$ and by the
sequentiality of $X$, we could find a sequence $ (x_n)_{n\in\omega}\subset F$ convergent to a point $ x\in U\setminus F $. It follows that there are elements $ K_1 ,K_2
\in\mathcal{K} $ such that $ x\in K_1 $ and $K_2$ contains almost
all members of the sequence $\{x^{-1}x_n\}_{n\in\w}$. Then the product $
K=K_1 K_2\in\K $ contains all points $x_n$, $n\ge n_0$, for some number $n_0$. Since the set $ F\cap K $ is closed, the limit point $x$ of the sequence $\{x_n\}_{n\ge n_0}\subset F\cap K$ belongs to $F\cap K\subset F$, which is a desirable contradiction showing that $U$ is an  $sk_\omega$-space.
\end{proof}

\end{document}